\newcommand{\closure}[2][3]{%
  {}\mkern#1mu\overline{\mkern-#1mu#2}}
\newtheorem{theorem}{Theorem}[section]
\newtheorem{lemma}[theorem]{Lemma}
\newtheorem{proposition}[theorem]{Proposition}
\newtheorem{corollary}[theorem]{Corollary}
\theoremstyle{definition}
\theoremstyle{remark}
\newtheorem{remark}[theorem]{Remark}
\numberwithin{equation}{section}
\begin{document}

\title[Bounds for the $p$-Laplacian]{Bounds on the Principal Frequency of the $p$-Laplacian}


\author{Guillaume Poliquin}
\address{Département de mathématiques et de statistique \\
Université de Montréal, CP 6128 succ. Centre-Ville, Montréal\\
H3C 3J7, Canada.}
\email{gpoliquin@dms.umontreal.ca}
\thanks{Research supported by a NSERC scholarship}

\keywords{ Inradius, $p$-Laplacian, principal frequency, nodal set, capacity, interior capacity radius. }

\subjclass[2010]{Primary  35P30; secondary: 58J50, 35P15.}

\date{}


\begin{abstract}

This paper is concerned with the lower bounds for the principal frequency of the $p$-Laplacian on $n$-dimensional Euclidean domains. In particular, we extend the classical results involving the inner radius of a domain and the first eigenvalue of the Laplace operator to the case $p\neq2$. As a by-product, we obtain  a lower bound on the size of the nodal set of an eigenfunction of the $p$-Laplacian on planar domains.

\end{abstract}
\maketitle

\section{Overview of the $p$-Laplacian}

\subsection{Physical models involving the $p$-Laplacian}

Let $\Omega$ be a bounded open subset of $\mathbb{R}^n$. For $1 < p < \infty$, the $p-$Laplacian of a function $f$ on $\Omega$ is defined by $\Delta_p f =  \operatorname{div}(|\nabla f|^{p-2} \nabla f)$ for suitable $f$. The $p$-Laplacian can be used to model the flow of a fluid through porous media in turbulent regime (see for instance \cite{DHT, DTh}) or the glacier's ice when treated as a non-Newtonian fluid with a nonlinear relationship between the rate deformation tensor and the deviatoric stress tensor (see \cite{GR}). It is also used in the Hele-Shaw approximation, a moving boundary problem (see \cite{KMC}).

Let us present a model, well known in the case of the Laplace operator, which remains very useful to understand the physical meaning behind some inequalities that we shall prove, in particular those involving the inner radius of $\Omega$. The nonlinearity of the $p$-Laplacian is often used to reflect the impact of non ideal material to the usual vibrating homogeneous elastic membrane, modeled by the Laplace operator. Thus, the following is used to describe a nonlinear elastic membrane under the load $f$,
\begin{eqnarray}
-\Delta_p (u) = f \qquad\mbox{ in } \Omega, \\
u=0 \quad\qquad \mbox{ on } \partial\Omega. \nonumber
\end{eqnarray}
The solution $u$ stands for the deformation of the membrane from the rest position (see \cite{CEP, Si}). In that case, its deformation energy is given by $\int_\Omega |\nabla u|^p dx$. Therefore, a minimizer of the Rayleigh quotient,
$$ \frac{\int_\Omega |\nabla u|^p dx}{\int_\Omega |u|^p dx},$$
on $W_0^{1,p}(\Omega)$ satisfies $-\Delta_p (u) =  \lambda_{1,p}|u|^{p-2}u  \mbox{ in } \Omega.$ Here $\lambda_{1,p}$ is usually referred as the principal frequency of the vibrating non elastic membrane.

\subsection{The eigenvalue problem for the $p$-Laplacian}

For $1 < p < \infty$, we study the following eigenvalue problem:

\begin{equation}\label{pLaplacian}
  \Delta_p u + \lambda |u|^{p-2}u=0 \mbox{ in } \Omega,
\end{equation}
where we impose the Dirichlet boundary condition and consider $\lambda$ to be the real spectral parameter. We say that $\lambda$ is an eigenvalue of $-\Delta_p$ if \eqref{pLaplacian} has a nontrivial weak solution $u_\lambda \in W^{1,p}_0(\Omega)$. That is, for any $v \in  C^\infty_0(\Omega)$,
\begin{equation}
\int_\Omega |\nabla u_\lambda|^{p-2} \nabla u_\lambda \cdot \nabla v - \lambda \int_\Omega |u_\lambda|^{p-2} u_\lambda v=0.
\end{equation}
 The function $u_\lambda$ is then called an eigenfunction of $-\Delta_p$ associated to the eigenvalue $\lambda$.

%
If $n\geq2$ and $p=2$, it is well known that one can obtain an increasing sequence of eigenvalues tending to $+\infty$ via the Rayleigh-Ritz method. Moreover, those are all the eigenvalues of $\Delta_2$. Linearity is a crucial component in the argument.

For the general case, it is known that the first eigenvalue of the Dirichlet eigenvalue problem of the $p$-Laplace operator, denoted by $\lambda_{1,p}$, is characterized as,
\begin{equation}\label{var1}
\lambda_{1,p} = \min_{0 \neq u\in C^\infty_0(\Omega)} \left \{ \frac{ \int_\Omega |\nabla u|^p dx}{\int_\Omega |u|^p dx} \right \}.
\end{equation}
The infinimum is attained for a function $u_{1,p} \in W^{1,p}_0(\Omega)$. In addition, $\lambda_{1,p}$ is simple and isolated (there is no sequence of eigenvalues such that $\lambda_{k,p}$ tends to $\lambda_{1,p}$; see \cite{Lind2}). Moreover, the eigenfunction $u_{1}$  associated to $\lambda_{1,p}$ does not change sign, and it is the only such eigenfunction (a proof can be found in \cite{Lind2}). As for $\lambda_{2,p} > \lambda_{1,p}$, it allows a min-max characterization and every eigenfunction associated to $\lambda_{2,p}$ changes sign only once in $\Omega$ (it was first shown in \cite{ACFK}; see also \cite{D1}). It is not known if $\lambda_{2,p}$ is isolated. Via for instance Lyusternick-Schnirelmann maximum principle,  it is possible to construct $\lambda_{k,p}$ for $k\geq 3$ and hence obtain an increasing sequence of eigenvalues of \eqref{pLaplacian}. There exist other variational characterizations of the $\Delta_p$ eigenvalues. However, no matter what variational characterization one chooses, it always remains to show that all the eigenvalues obtained exhaust the whole spectrum of $\Delta_p$.



\section{Introduction and main results}

\subsection{The principal frequency and the inradius}

Using the domain monotoni\-city property, it is easy to obtain an upper bound for the principal frequency of the $p$-Laplacian. Indeed, for $B_r \subset \Omega$, we have that
$
\lambda_{1,p}(\Omega)\leq \lambda_{1,p}(B_r) = \frac{\lambda_{1,p}(B_1)}{r^p}.
$
Therefore, if we consider the largest ball that can be inscribed in $\Omega$, we get
\begin{equation}\label{monotonicite}
\rho_\Omega \leq \left( \frac{\lambda_{1,p}(B_1)}{\lambda_{1,p}(\Omega)} \right)^{\frac{1}{p}},
\end{equation}
where $\rho_\Omega:=\sup\{r:\exists B_r \subset \Omega\}$. Note that unlike the case $p=2$ corresponding to the Laplace operator, there are no explicit formulas for $\lambda_{1,p}$ of a ball.

Lower bounds involving the principal frequency are a greater challenge. Nevertheless, some results are known. Let us start by recalling that the classical Faber-Krahn inequality can be adapted to the $p$-Laplacian as noted in \cite[p. 224]{Lind1} and \cite[p. 3353]{Hua} (see also the rearrangement results in \cite{Ka}) : among all domains of given $n$-dimensional volume, the ball minimizes $\lambda_{1,p}$. In other words, we have that $\lambda_{1,p}(\Omega) \geq \lambda_{1,p}(\Omega^*),$ where $\Omega^*$ stands for the $n$-dimensional ball of same volume than $\Omega$.   

For the Laplacian, lower bounds of the type \begin{equation}\label{goal} \lambda_{1,2}(\Omega) \geq \alpha_{n,2} \ \rho_{\Omega}^{-2},\end{equation}
where $\alpha_{n,2} >0$ is a positive constant, have been studied extensively. If $n=2$, the first result proved in that direction is due to J. Hersch in \cite{He}, and states that for convex simply connected planar domains, the latter inequality holds with the constant $\alpha_{2,2} = \frac{\pi^2}{4}$.
This result was later improved by E. Makai in \cite{Mak}. For all simply connected domains, he obtained the constant $\alpha_{2,2} = \frac{1}{4}$.
An adaptation of Makai's method for the pseudo $p$-Laplacian was studied in \cite{B} and lead to a similar lower bound for simply-connected convex planar domains.

By a different approach, W. K. Hayman also obtained a bound for simply-con\-nected domains with the constant, $\alpha_{2,2}= 1/900$. R. Osserman (see \cite{O1, O2, O3}) later improved that result to $\alpha_{2,2} =1/4$. R. Osserman also relaxed the assumption of simple connectedness of the domain by considering the connectivity $k$ of a planar domain has a parameter. He obtained a similar lower bound for domains of connectivity $k \geq 2$, a result that was improved in \cite{Cr}.


In higher dimensions, it was first noted by W. K. Hayman in \cite{Hay} that if $A$ is a ball with many narrow inward pointing spikes removed from it, then $\lambda_{1,2}(A) = \lambda_{1,2}(\mbox{Ball}),$ but in that case the inradius of $A$  tends to $0$. Therefore,  bounds of the type,
$$\lambda_{1,2}(\Omega) \geq \alpha_{n,2} \ \rho_{\Omega}^{-2},$$
are generally not possible to obtain even if $\Omega$ is assumed to be simply connected. The higher dimensional case for Euclidean domains is discussed in Section \ref{S3}. Similar bounds on manifolds are presented in Section \ref{S4}.

In the next subsection, we extend some of these results to the case $p\neq 2$. The size nodal set of an eigenfunction of the $p$-Laplacian is also discussed. All proofs are given in Section \ref{S5}.



\subsection{First eigenvalue of the $p$-Laplacian and inradius of planar domains}

We present some lower bounds for the principal frequency involving the inradius of a planar domain with an explicit constant,
 $$\lambda_{1,p}(\Omega) \geq \alpha_{2,p} \ \rho_{\Omega}^{-p}.$$
We do so by adapting proofs obtained for the usual Laplacian. We need two main ingredients:  a modified Cheeger-type inequality (see the original result for the Laplacian in \cite{Che}; a generalized version for the $p$-Laplacian can be found in \cite{KF}), and a geometric inequality relating the ratio of the length of the boundary of a domain and its area. Regarding the modified Cheeger inequality, it consists of an adapted version of a result in \cite{O4}:

\begin{lemma}\label{CheegerLemma}
Let $(S,g)$ be a Riemannian surface, and let $D \subset S$ be a domain homeomorphic to a planar domain of finite connectivity $k$. Let $F_k$ be the family of relatively compact subdomains of $D$ with smooth boundary and with connectivity at most $k$. Let
\begin{equation}\label{chegconst}
h_k(D)=\inf_{D'\in F_k} \frac{|\partial D'|}{|D'|},
\end{equation}
where $|D|$ is the area of $D'$ and $|\partial D'|$ is the length of its boundary. Then,
\begin{equation}\label{14}  \lambda_{1,p}(D) \geq \left(\frac{h_k(D)}{p} \right)^{p}. \end{equation}
\end{lemma}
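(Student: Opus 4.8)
The plan is to run the classical Cheeger argument directly on the Rayleigh quotient in \eqref{var1}, tracking the exponents so that the power $p$ emerges. Fix a test function $u \in C_0^\infty(D)$; since the Rayleigh quotient in \eqref{var1} is unchanged under $u \mapsto |u|$, I may assume $u \geq 0$. The substitution that makes the $p$-homogeneity work is $w = u^p$, for which $|\nabla w| = p\, u^{p-1}|\nabla u|$. First I would apply the coarea formula on $(S,g)$,
\begin{equation*}
\int_D |\nabla w|\, dx = \int_0^\infty |\partial\{w > t\}|\, dt,
\end{equation*}
and note that by Sard's theorem the level set $\{w = t\}$ is a smooth curve and $\Omega_t := \{w > t\}$ is a domain with smooth boundary for almost every $t > 0$. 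Because $u$ vanishes on $\partial D$, each such $\Omega_t$ is relatively compact in $D$.

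The heart of the matter is to show $|\partial \Omega_t| \geq h_k(D)\,|\Omega_t|$, and I expect the required topological reduction to be the main obstacle: the super-level sets of an arbitrary function need not have connectivity at most $k$ (a ``crater'' over a simply connected $D$ already produces annular level sets). I would argue componentwise. Let $A$ be a connected component of $\Omega_t$, and split its complementary holes into those whose interior lies entirely in $D$ and those enclosing a complementary component of $D$. Filling a hole of the first kind enlarges the area and shortens the boundary, hence only lowers the ratio $|\partial A|/|A|$ while keeping the set inside $D$. After filling all such holes I obtain a subdomain $\widehat A \subset D$ whose remaining holes enclose pairwise distinct complementary components of $D$; since $D$ has connectivity $k$ there are at most $k$ of these, so $\widehat A$ has connectivity at most $k$ and thus $\widehat A \in F_k$. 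Consequently $|\partial A|/|A| \geq |\partial \widehat A|/|\widehat A| \geq h_k(D)$, and summing over the components of $\Omega_t$ gives $|\partial \Omega_t| \geq h_k(D)\,|\Omega_t|$.

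Feeding this back into the coarea identity and using $\int_0^\infty |\Omega_t|\,dt = \int_D w\,dx = \int_D u^p\,dx$ yields
\begin{equation*}
p \int_D u^{p-1}|\nabla u|\, dx = \int_D |\nabla w|\, dx \geq h_k(D) \int_D u^p\, dx.
\end{equation*}
The final step is Hölder's inequality with conjugate exponents $p$ and $p/(p-1)$ applied to the product $u^{p-1}\cdot|\nabla u|$,
\begin{equation*}
\int_D u^{p-1}|\nabla u|\, dx \leq \left(\int_D |\nabla u|^p\, dx\right)^{1/p}\left(\int_D u^p\, dx\right)^{(p-1)/p},
\end{equation*}
where the exponent $\gamma = p$ in $w = u^\gamma$ was chosen precisely so that this Hölder step closes. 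Combining the two displays, dividing by $\left(\int_D u^p\right)^{(p-1)/p}$ to isolate $\left(\int_D u^p\right)^{1/p}$, and raising to the $p$-th power gives $h_k(D)^p \int_D u^p \leq p^p \int_D |\nabla u|^p$. Hence the Rayleigh quotient of every admissible $u$ is at least $(h_k(D)/p)^p$, and taking the infimum via \eqref{var1} yields \eqref{14}. Only the hole-filling reduction is genuinely delicate; the remainder is the standard coarea-plus-Hölder scheme.
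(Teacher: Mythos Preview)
Your argument is correct; the coarea--H\"older scheme with $w=u^p$ is the same one the paper uses, and your hole-filling step is sound (distinct remaining holes of $\widehat A$ contain distinct bounded complementary components of $D$, of which there are $k-1$, so $\widehat A$ has connectivity at most $k$).

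The paper handles the topological step differently. Rather than work with an arbitrary test function, it first passes to a regular exhaustion so that a first eigenfunction $u_1\geq 0$ exists on a smooth-boundary domain, and then shows that the super-level sets $D_t=\{u_1^p>t\}$ themselves have connectivity at most $k$: if some component of $D_t$ had an extra hole lying entirely in $D$, then $u_1$ would equal $t^{1/p}$ on its boundary, and since $u_1$ is $p$-superharmonic (it is a nonnegative eigenfunction), the comparison principle forces $u_1^p\geq t$ inside the hole, a contradiction. Thus no hole-filling is needed. Your route is more elementary---it works for every $u\in C_0^\infty(D)$, so it plugs directly into \eqref{var1} and avoids both the exhaustion step and any regularity/comparison theory for $p$-Laplace eigenfunctions. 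The paper's route, on the other hand, yields the extra structural fact that level sets of the ground state inherit the connectivity of $D$, which is what makes the constant $h_k$ (rather than the full Cheeger constant) legitimate without ever modifying the sets.
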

The crucial point of Lemma \ref{CheegerLemma} resides in the fact that the Cheeger constant is computed among subdomains that have a connectivity of at most the connectivity of the domain, which allows us to use geometric inequalities accordingly.

We start by proving the following extension of Osserman-Croke's result to the $p$-Laplacian. We actually generalize a stronger result that was implicit in \cite{Mak} and \cite{O1}, but made explicit in \cite{G}. Instead of considering the inner radius, we use the reduced inradius, which is defined by:
$$ \tilde{\rho}_\Omega := \frac{\rho_{\Omega}}{1+\frac{\pi \rho_\Omega^2}{|\Omega|}}.$$
Notice that $\frac{\rho_\Omega }{2} < \tilde{\rho}_\Omega< \rho_\Omega $.

The first main result consists of extending classical planar inradius bounds of the Laplace operator to the case of the $p$-Laplacian:
\begin{theorem}\label{OssCroP1}
Let $\Omega$ be a domain in $\mathbb R^2$. If $\Omega$ is simply connected, then for all $p > 1$, we have that
\begin{equation}\label{OssCro1}
\lambda_{1,p}(\Omega) \geq \left( \frac{1}{p \ \tilde{\rho}_{\Omega}}\right)^p.
\end{equation}
If $\Omega$ is of connectivity $k \geq 2$, then for all $p > 1$, we have that
\begin{equation}\label{OssCro2}
\lambda_{1,p}(\Omega) \geq \frac{2^{p/2}}{k^{p/2} p^p \ \rho_{\Omega}^p}.
\end{equation}
\end{theorem}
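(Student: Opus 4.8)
The plan is to combine the modified Cheeger inequality of Lemma~\ref{CheegerLemma} with two classical, purely Euclidean isoperimetric inequalities for planar domains, so that all of the $p$-dependence of the bound is carried by Lemma~\ref{CheegerLemma} alone. By that lemma it suffices to bound the Cheeger constant $h_k(\Omega)$ from below: once we show $h_1(\Omega) \geq 1/\tilde{\rho}_\Omega$ in the simply connected case and $h_k(\Omega) \geq \sqrt{2}/(\sqrt{k}\,\rho_\Omega)$ in the case of connectivity $k \geq 2$, raising to the power $p$ and dividing by $p^p$ produces exactly \eqref{OssCro1} and \eqref{OssCro2}. Since $h_k$ is an infimum of $|\partial D'|/|D'|$ over subdomains $D'\in F_k$, the whole problem reduces to estimating this ratio for a single planar domain in terms of its inradius.

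For the simply connected case I would invoke Makai's geometric inequality (implicit in \cite{Mak, O1}, explicit in \cite{G}): for any simply connected planar domain $D'$ with area $|D'|$, boundary length $|\partial D'|$ and inradius $\rho_{D'}$,
\begin{equation*}
|\partial D'| \geq \frac{|D'|}{\rho_{D'}} + \pi \rho_{D'},
\end{equation*}
with equality for a disk. Dividing by $|D'|$ and recalling the definition of the reduced inradius, this is precisely $|\partial D'|/|D'| \geq 1/\tilde{\rho}_{D'}$. To replace $\tilde{\rho}_{D'}$ by $\tilde{\rho}_\Omega$, I would check that the map $(\rho,A)\mapsto \rho A/(A+\pi\rho^2)$ is nondecreasing in each variable on the region $A \geq \pi\rho^2$, which holds for every domain since it contains its inscribed disk. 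As $D'\subset\Omega$ forces $\rho_{D'}\leq\rho_\Omega$ and $|D'|\leq|\Omega|$, monotonicity yields $\tilde{\rho}_{D'}\leq\tilde{\rho}_\Omega$, hence $1/\tilde{\rho}_{D'}\geq 1/\tilde{\rho}_\Omega$. Taking the infimum over $D'\in F_1$ gives $h_1(\Omega)\geq 1/\tilde{\rho}_\Omega$, and Lemma~\ref{CheegerLemma} finishes \eqref{OssCro1}.

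For the case of connectivity $k\geq 2$ I would instead use Croke's refinement \cite{Cr}: for a planar domain $D'$ of connectivity $m$,
\begin{equation*}
\frac{|\partial D'|}{|D'|} \geq \frac{\sqrt{2}}{\sqrt{m}\,\rho_{D'}},
\end{equation*}
which is sharp for the annulus ($m=2$, where both sides equal $1/\rho_{D'}$). Every $D'\in F_k$ has connectivity $m\leq k$, so $\sqrt{m}\leq\sqrt{k}$, and together with $\rho_{D'}\leq\rho_\Omega$ this gives $|\partial D'|/|D'|\geq \sqrt{2}/(\sqrt{k}\,\rho_\Omega)$ uniformly in $D'$. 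Taking the infimum yields $h_k(\Omega)\geq \sqrt{2}/(\sqrt{k}\,\rho_\Omega)$, and Lemma~\ref{CheegerLemma} gives \eqref{OssCro2}.

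The genuinely $p$-specific content lives entirely inside Lemma~\ref{CheegerLemma}; the two isoperimetric estimates above are classical and independent of $p$, so the remaining work is essentially bookkeeping. The point I expect to require the most care is the connectivity restriction built into $F_k$: the Cheeger constant must be computed over competitors whose connectivity does not exceed that of $\Omega$, since otherwise one could not apply the connectivity-$m$ geometric inequality to each $D'$ with $m\leq k$. A secondary subtlety is the two-variable monotonicity of $\tilde{\rho}$, which is exactly what legitimizes passing from the reduced inradius of an arbitrary competitor $D'$ to that of $\Omega$ itself.
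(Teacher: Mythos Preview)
Your proposal is correct and follows essentially the same approach as the paper: combine Lemma~\ref{CheegerLemma} with Bonnesen's inequality $\rho_{D'}|\partial D'| \geq |D'| + \pi\rho_{D'}^2$ (which you attribute to Makai/Osserman---it is the same inequality) and the monotonicity of the reduced inradius for the simply connected case, and with Croke's inequality $|\partial D'|/|D'|\geq\sqrt{2}/(\sqrt{m}\,\rho_{D'})$ for the case of connectivity $k$. The only difference is that the paper quotes the monotonicity $\tilde\rho_{D'}\leq\tilde\rho_\Omega$ as Lemma~\ref{Grieser} from \cite{G}, whereas you reprove it directly via the two-variable monotonicity of $(\rho,A)\mapsto \rho A/(A+\pi\rho^2)$ on $\{A\geq\pi\rho^2\}$; your verification of this is correct.
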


It is hard to say whether these bounds are optimal for the case $p\neq 2$ since we can not compute eigenfunctions and eigenvalues explicitly on simple domains, unlike the case of the usual Laplacian. Also note that for the Laplace operator, instead of the constant $\frac{1}{4}$ given by \eqref{OssCroP1}, the better constant $\approx 0.6197$ was found using probabilistic methods in \cite{BaC}.

\begin{remark} For a bounded domain $\Omega$ in $\mathbb R^n$, the ground state problem associated to the $\infty$-Laplacian is the following:
\begin{equation}
\min\left\{ |\nabla u| - \lambda_{1,\infty} u; -\Delta_{\infty}u\right\} = 0,
\end{equation}
where $\Delta_\infty u:= \sum_{i,j=1}^n \frac{\partial u}{\partial x_i}\frac{\partial u}{\partial x_j}\frac{\partial^2 u}{\partial x_i \partial x_j}$. It is a notable feature that $\lambda_{1,\infty} = \frac{1}{\rho_\Omega}$, i.e. the value of $\lambda_{1,\infty}$ can immediately be read off the geometry of $\Omega$, without any topological assumptions on $\Omega$ (see \cite{JL} and reference therein for additional details).
\end{remark}

\subsection{The limiting case p=1}

As $p\to1$, the limit equation formally reads
\begin{eqnarray}
-\operatorname{div}\left (\frac{\nabla u}{|\nabla u|}\right) = \lambda_{1,1}(\Omega) \qquad\mbox{ in } \Omega, \\
u=0 \quad\qquad \mbox{ on } \partial\Omega, \nonumber
\end{eqnarray}
where $\lambda_{1,1}(\Omega) := \lim_{p\to1^+} \lambda_{1,p}(\Omega) = h(\Omega),$ where $h(\Omega)= \inf_{D\subset \Omega} \frac{|\partial \Omega|}{|\Omega|}$ with $D$ varying over all non-empty sets $D\subset \Omega$ of finite perimeter (see \cite{KF, KS} for instance). Here, $\Omega$ is assumed to be smooth enough.
If we restrict the subdomains considered in the computation of $h(\Omega)$ to smooth simply connected ones, we get

\begin{proposition}\label{p=1}
If $\Omega$ is a simply connected planar domain, then
\begin{equation}
\lambda_{1,1}(\Omega) \geq \frac{1}{\tilde{\rho}_{\Omega}}.
\end{equation}
\end{proposition}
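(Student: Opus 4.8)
The plan is to read off Proposition \ref{p=1} as the $p\to 1^{+}$ limit of the simply connected case of Theorem \ref{OssCroP1}. By the discussion preceding the statement, $\lambda_{1,1}(\Omega)$ is \emph{defined} as $\lim_{p\to 1^{+}}\lambda_{1,p}(\Omega)$, and this limit exists under the standing smoothness hypothesis on $\Omega$. Since $\Omega$ is simply connected, inequality \eqref{OssCro1} holds for every $p>1$:
\begin{equation*}
\lambda_{1,p}(\Omega)\geq\left(\frac{1}{p\,\tilde{\rho}_\Omega}\right)^{p}.
\end{equation*}
First I would let $p\to 1^{+}$ on both sides and use that a non-strict inequality is preserved under limits.

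The only computation is the limit of the right-hand side. Writing $\left(p\,\tilde{\rho}_\Omega\right)^{-p}=\exp\!\left(-p\log(p\,\tilde{\rho}_\Omega)\right)$ and invoking continuity of the exponential, the exponent tends to $-\log\tilde{\rho}_\Omega$ as $p\to 1^{+}$, so the right-hand side converges to $\tilde{\rho}_\Omega^{-1}$. Because $\lambda_{1,p}(\Omega)\geq\left(p\,\tilde{\rho}_\Omega\right)^{-p}$ for all $p>1$ while the left-hand side converges to $\lambda_{1,1}(\Omega)$, passing to the limit yields $\lambda_{1,1}(\Omega)\geq\tilde{\rho}_\Omega^{-1}$, as claimed.

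It is worth recording the equivalent, more intrinsic argument that matches the ``restriction'' mentioned in the preamble. Restricting the competitors in $h(\Omega)$ to smooth simply connected subdomains $D'$ gives exactly the constant $h_1(\Omega)$ of Lemma \ref{CheegerLemma} with $k=1$, and the geometric inequality $|\partial D'|/|D'|\geq\tilde{\rho}_{D'}^{-1}$ underlying the proof of \eqref{OssCro1} applies to each such $D'$. Since the map $\rho\mapsto\rho/(1+\pi\rho^{2}/A)$ is increasing in both $\rho$ and $A$, and $D'\subset\Omega$ forces $\rho_{D'}\leq\rho_\Omega$ and $|D'|\leq|\Omega|$, one gets $\tilde{\rho}_{D'}\leq\tilde{\rho}_\Omega$, hence $|\partial D'|/|D'|\geq\tilde{\rho}_\Omega^{-1}$; taking the infimum over $D'$ gives $\lambda_{1,1}(\Omega)=h_1(\Omega)\geq\tilde{\rho}_\Omega^{-1}$.

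The proposition is therefore essentially a corollary of Theorem \ref{OssCroP1}, and the only point that genuinely requires care is the identity $\lambda_{1,1}(\Omega)=\lim_{p\to 1^{+}}\lambda_{1,p}(\Omega)$ together with its Cheeger-constant interpretation. This is where the smoothness of $\Omega$ is needed and is precisely the content of the results cited in the preamble (\cite{KF,KS}); granting it, both routes are elementary.
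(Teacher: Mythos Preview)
Your primary approach—passing to the limit $p\to 1^{+}$ in \eqref{OssCro1}—is correct and clean. The paper does not give a separate proof of Proposition~\ref{p=1}; the sentence immediately preceding the statement is all the argument it offers, and that is essentially your second route: identify $\lambda_{1,1}(\Omega)$ with the Cheeger constant via \cite{KF,KS}, restrict the competitors to simply connected subdomains, and apply Bonnesen's inequality together with the monotonicity of $\tilde\rho$ (your two-variable monotonicity computation is exactly Lemma~\ref{Grieser}). So your alternative argument matches the paper, while your main argument is a genuinely different and arguably tidier derivation.

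One point in your second route deserves a comment. You write $\lambda_{1,1}(\Omega)=h_1(\Omega)$, but the identity supplied by \cite{KF,KS} is $\lambda_{1,1}(\Omega)=h(\Omega)$, the infimum over \emph{all} finite-perimeter competitors. Restricting the class of competitors can only raise an infimum, so a priori $h_1(\Omega)\ge h(\Omega)$, which points the wrong way. The missing step—glossed over in the paper's one-line preamble just as in your sketch—is that, because $\Omega$ is simply connected, one may fill in the bounded complementary components of any competitor $D\subset\Omega$ to obtain a simply connected $\widetilde D\subset\Omega$ with $|\widetilde D|\ge|D|$ and $|\partial\widetilde D|\le|\partial D|$; hence $h(\Omega)=h_1(\Omega)$ after all. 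Your limiting argument sidesteps this issue entirely, which is a real advantage of that route.
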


\subsection{A bound on the size of the nodal set in the planar case}

This subsection is devoted to the study of the size of the nodal set $ \mathcal{Z}_\lambda = \{ x\in \Omega : u_\lambda (x) =0 \}$ of an eigenfunction $u_\lambda$ of \eqref{pLaplacian}. Yau's Conjecture (see \cite{Y}) asserts that the size of the nodal set of an eigenfunction $u_\lambda$ is comparable to $\lambda^{1/2}$.

Donnelly and Fefferman (see \cite{DF}) proved Yau's Conjecture for real analytic manifolds. However, if one assumes only that $(M,g)$ is smooth, Yau's Conjecture remains partially open. In the planar case, it is known that $\mathcal{H}^1(\mathcal{Z}_\lambda) \geq C_1 \lambda^{1/2},$ where $\mathcal{H}^1$ stands for the $1$-dimensional Hausdorff measure (see \cite{Bru}). If $n\geq3$, lower bounds were obtained (see recent works of Sogge-Zelditch in \cite{SZ}, Colding-Minicozzi in \cite{CM}, and Mangoubi in \cite{M}).

We generalize the lower bound in the planar case for the $p$-Laplacian on a planar bounded domain. However, the situation is slightly different from the one for the Laplace operator mainly since it is still not known whether the interior of $\mathcal{Z}_\lambda$ is empty or not (see for instance \cite{D1, GM} for a discussion on that matter). The result is the following:

\begin{theorem}\label{nodalsets}
Let $\Omega$ be a planar bounded domain. There is a constant $M>0$ such that for $\lambda>M$, there exists a positive constant $C$ such that
\begin{equation}
\mathcal{H}^1(\mathcal{Z}_\lambda) \geq C \lambda^{1/p}.
\end{equation}
\end{theorem}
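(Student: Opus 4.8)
The plan is to localize to the nodal domains of $u_\lambda$, observe that on each of them $u_\lambda$ is a \emph{first} eigenfunction, and then trade an inradius bound for length via a planar area--perimeter inequality. Throughout I would assume $\mathcal{H}^1(\mathcal{Z}_\lambda) < \infty$, since otherwise the asserted inequality is automatic; this reduction also disposes of the delicate question of whether $\mathcal{Z}_\lambda$ has nonempty interior, which is exactly the feature distinguishing the $p$-Laplacian from the case $p=2$. The first step is that the nodal domains carry $\lambda$ as their first eigenvalue: since eigenfunctions of \eqref{pLaplacian} are of class $C^{1,\alpha}$, each connected component $\omega$ of $\Omega \setminus \mathcal{Z}_\lambda$ is open, $u_\lambda$ has constant sign on $\omega$, and $u_\lambda$ vanishes on $\partial\omega$. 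Hence $u_\lambda|_\omega \in W^{1,p}_0(\omega)$ is a sign-definite weak solution of \eqref{pLaplacian} on $\omega$ with spectral parameter $\lambda$, and because the only eigenvalue of $-\Delta_p$ admitting an eigenfunction of constant sign is the first one (recalled in Section~1), I would conclude $\lambda = \lambda_{1,p}(\omega)$ for every nodal domain $\omega$.

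Next I would bound the inradius of each nodal domain. If a ball $B_r$ is contained in $\omega$, then domain monotonicity together with the scaling $\lambda_{1,p}(B_r) = \lambda_{1,p}(B_1)\,r^{-p}$ gives $\lambda = \lambda_{1,p}(\omega) \le \lambda_{1,p}(B_1)\,r^{-p}$, so that $r \le \rho_0 := \big(\lambda_{1,p}(B_1)/\lambda\big)^{1/p}$. Thus every nodal domain has inradius at most $\rho_0 = C_p\,\lambda^{-1/p}$; equivalently, this says that $\mathcal{Z}_\lambda$ is $\rho_0$-dense, and it may alternatively be read off from the inradius bounds of Theorem~\ref{OssCroP1}.

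The geometric input for passing from inradius to length is the planar inequality $|\omega| \le \rho_{\mathrm{in}}(\omega)\,|\partial\omega|$, where $\rho_{\mathrm{in}}(\omega)$ denotes the inradius; I would derive it from the coarea formula applied to $d(\cdot) = \mathrm{dist}(\cdot,\partial\omega)$, writing $|\omega| = \int_0^{\rho_{\mathrm{in}}(\omega)} \mathcal{H}^1(\{d = t\})\,dt$ and bounding each inner level length by $|\partial\omega|$. Using $\rho_{\mathrm{in}}(\omega) \le \rho_0$ and summing over all nodal domains, while noting that each interior nodal arc is shared by exactly two of them, I would obtain $|\Omega| = \sum_\omega |\omega| \le \rho_0 \sum_\omega |\partial\omega| \le \rho_0\big(2\,\mathcal{H}^1(\mathcal{Z}_\lambda) + |\partial\Omega|\big)$. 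Rearranging yields $\mathcal{H}^1(\mathcal{Z}_\lambda) \ge \tfrac12\big(|\Omega|\,\rho_0^{-1} - |\partial\Omega|\big)$, and since $\rho_0^{-1} = C_p^{-1}\lambda^{1/p} \to \infty$, for all $\lambda$ larger than some $M$ the first term dominates and produces $\mathcal{H}^1(\mathcal{Z}_\lambda) \ge C\,\lambda^{1/p}$.

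The main obstacle is the low regularity of the nodal set of the $p$-Laplacian. Unlike the analytic or smooth curve structure available for $p=2$, here one knows only that $u_\lambda \in C^{1,\alpha}$, and it remains open whether $\mathrm{int}(\mathcal{Z}_\lambda) = \emptyset$. I would keep the argument robust by (i) reducing at the outset to $\mathcal{H}^1(\mathcal{Z}_\lambda)<\infty$, and (ii) phrasing the three steps above only through finite-perimeter and coarea tools, so that no structure theorem for $\mathcal{Z}_\lambda$ is invoked. The remaining technical points are justifying the area--perimeter inequality for general (non-convex, possibly multiply connected) nodal domains — concretely, the monotonicity $\mathcal{H}^1(\{d=t\}) \le |\partial\omega|$ of the inner level lengths — and controlling the boundary contribution $|\partial\Omega|$, the latter being precisely what forces the threshold $\lambda>M$ in the statement.
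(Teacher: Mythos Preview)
Your route is genuinely different from the paper's. The paper follows Br\"uning's strategy: it tiles $\Omega$ by squares of side $c\sim\lambda^{-1/p}$, uses Proposition~\ref{zeros} to place a zero of $u_\lambda$ in each square, and then invokes Trudinger's Harnack inequality together with domain monotonicity to rule out the possibility that the nodal set near that zero stays inside the square (a closed nodal curve inside the square would bound a nodal domain, contradicting $\lambda_{1,p}(S_c)>\lambda$ after rescaling). Hence the nodal set in each square reaches the square's boundary and contributes length $\gtrsim\lambda^{-1/p}$; summing over the $\sim\lambda^{2/p}$ squares gives the bound. Your argument, by contrast, never uses Harnack and works globally through the area--inradius--perimeter inequality on each nodal domain, which is more elegant when it goes through.

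Two points, one of which you flagged and one you did not. First, the coarea justification you propose for $|\omega|\le\rho_{\mathrm{in}}(\omega)\,|\partial\omega|$ does not work as written: the monotonicity $\mathcal{H}^1(\{d=t\})\le|\partial\omega|$ is \emph{false} for non-convex $\omega$ (inner parallel curves can lengthen at re-entrant corners). The inequality itself is still available for simply connected planar $\omega$ via Bonnesen ($\rho L\ge A+\pi\rho^2$), and with a dimensional constant for general $\omega$ via a tube/Minkowski-content estimate, so this is repairable. Second, and more seriously, the summation step $\sum_\omega\mathcal{H}^1(\partial\omega)\le 2\,\mathcal{H}^1(\mathcal{Z}_\lambda)+|\partial\Omega|$ implicitly uses that $\mathcal{H}^1$-a.e.\ point of $\mathcal{Z}_\lambda$ lies on the boundary of at most two nodal domains, i.e.\ that the singular set $\{u_\lambda=0\}\cap\{\nabla u_\lambda=0\}$ has $\mathcal{H}^1$-measure zero. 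For $p=2$ this is classical, but for the $p$-Laplacian it is, as far as I know, open---precisely the regularity issue you allude to elsewhere. The paper's square-counting argument sidesteps this because it never sums perimeters of nodal domains; it only needs, square by square, that the zero at the center is connected within $\mathcal{Z}_\lambda$ to the square's boundary, which the Harnack/monotonicity dichotomy supplies.
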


In order to prove the previous theorem, we need to start proving the analog of a classical result of the Dirichlet Laplacian stating that every eigenfunction must vanish in a ball of radius comparable to the wavelength:

\begin{proposition}\label{zeros}
Let $\Omega \subset \mathbb{R}^n$ be a bounded domain.  Let $R=\left( \frac{C}{\lambda}\right)^{\frac{1}{p}},$ where $C>\lambda_{1,p}(B_1),$ the first eigenvalue of a ball of radius $1$. Then, any eigenfunction $u_\lambda$ of \eqref{pLaplacian} vanishes in any ball of radius $R$.
\end{proposition}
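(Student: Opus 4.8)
The plan is to argue by contradiction. Suppose that some ball $B_R \subset \Omega$ contains no zero of $u_\lambda$. Since eigenfunctions of the $p$-Laplacian are continuous (indeed $C^{1,\alpha}$ in the interior by the regularity theory of DiBenedetto and Tolksdorf), $u_\lambda$ has constant sign on $B_R$; replacing $u_\lambda$ by $-u_\lambda$ if necessary, I may assume $u_\lambda > 0$ on $B_R$. The strategy is to show that the existence of such a positive solution forces $\lambda \leq \lambda_{1,p}(B_R)$, which will contradict the choice of $R$.

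First I would record the scaling identity $\lambda_{1,p}(B_R) = \lambda_{1,p}(B_1)/R^p$ already used in \eqref{monotonicite}. With $R = (C/\lambda)^{1/p}$ this gives $\lambda_{1,p}(B_R) = (\lambda_{1,p}(B_1)/C)\,\lambda$, and since $C > \lambda_{1,p}(B_1)$ we obtain the strict inequality $\lambda_{1,p}(B_R) < \lambda$. Hence it suffices to reach a contradiction by proving $\lambda \leq \lambda_{1,p}(B_R)$.

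To establish $\lambda \leq \lambda_{1,p}(B_R)$, I would compare $u_\lambda$ with the positive first eigenfunction $\phi$ of $B_R$, normalized so that $-\Delta_p \phi = \Lambda\, \phi^{p-1}$ with $\Lambda := \lambda_{1,p}(B_R)$, $\phi > 0$ in $B_R$ and $\phi = 0$ on $\partial B_R$. The key tool is the Picone identity of Allegretto--Huang: for $u > 0$ and $v \geq 0$,
$$R(u,v) := |\nabla v|^p - |\nabla u|^{p-2}\nabla u \cdot \nabla\!\left(\frac{v^p}{u^{p-1}}\right) \geq 0.$$
Taking $u = u_\lambda$, $v = \phi$ and integrating over $B_R$, I would insert $\phi^p/u_\lambda^{p-1}$ as a test function in the weak formulation of the equation satisfied by $u_\lambda$ on $B_R$, which yields $\int_{B_R} |\nabla u_\lambda|^{p-2}\nabla u_\lambda \cdot \nabla(\phi^p/u_\lambda^{p-1}) = \lambda \int_{B_R} \phi^p$; combined with the Rayleigh identity $\int_{B_R} |\nabla \phi|^p = \Lambda \int_{B_R} \phi^p$ this gives
$$0 \leq \int_{B_R} R(u_\lambda, \phi) = (\Lambda - \lambda)\int_{B_R} \phi^p.$$
Since $\int_{B_R} \phi^p > 0$, we conclude $\Lambda \geq \lambda$, the desired contradiction.

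The main obstacle is the admissibility of the test function $\phi^p/u_\lambda^{p-1}$: although $u_\lambda > 0$ inside $B_R$, it may degenerate near $\partial B_R$, so $\phi^p/u_\lambda^{p-1}$ need not lie in $W_0^{1,p}(B_R)$ a priori. I would handle this by a standard approximation, replacing $u_\lambda$ by $u_\lambda + \varepsilon$ in the denominator (or by working on a slightly smaller concentric ball where $u_\lambda$ is bounded below), establishing the inequality there, and then passing to the limit $\varepsilon \to 0$ using the interior $C^{1,\alpha}$-bounds and dominated convergence. The remaining steps---the scaling identity and the nonnegativity in Picone's identity---are routine.
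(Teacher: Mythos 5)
Your argument is correct, but it reaches the key inequality $\lambda \leq \lambda_{1,p}(B_R)$ by a different mechanism than the paper. The paper's proof is softer: if $u_\lambda \neq 0$ on $B_R$, then $B_R$ is contained in a nodal domain $A$ of $u_\lambda$; the restriction of $u_\lambda$ to $A$ is a sign-definite eigenfunction, hence $\lambda = \lambda_{1,p}(A)$, and domain monotonicity together with \eqref{monotonicite} gives $R \leq \rho_A \leq (\lambda_{1,p}(B_1)/\lambda)^{1/p}$, contradicting $C > \lambda_{1,p}(B_1)$. You instead work directly on the ball via the Allegretto--Huang Picone identity, showing that a positive solution of the eigenvalue equation on $B_R$ forces $\lambda \leq \lambda_{1,p}(B_R) = \lambda_{1,p}(B_1)R^{-p} < \lambda$. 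The numerics at the end are identical; the difference is in how one certifies $\lambda \leq \lambda_{1,p}(B_R)$. Your route has the advantage of being self-contained and of sidestepping a point the paper leaves implicit, namely that the restriction of a (possibly higher) eigenfunction of the $p$-Laplacian to a nodal domain $A$ lies in $W_0^{1,p}(A)$ and is therefore a genuine first eigenfunction there --- a fact that requires some care when $\partial A$ is irregular. The price is the admissibility analysis for the test function $\phi^p/u_\lambda^{p-1}$, which you correctly identify and resolve by the standard $(u_\lambda+\varepsilon)$-regularization. Both proofs are valid; yours trades the nodal-domain machinery for the Picone computation.
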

This result can be deduced from \eqref{monotonicite}.

%

\section{Lower bounds involving the inradius in higher dimensions}\label{S3}

\subsection{Lieb's and Maz'ya-Shubin's approaches to the inradius problem}

Recall that if $p\leq n-1$, W. K. Hayman's counterexample holds. Therefore, it is not possible to get such lower bounds in that case even if $\Omega$ is simply connected or if $\partial \Omega$ connected. Nevertheless, E. Lieb obtained a similar lower bound by relaxing the condition that the ball has to be completely inscribed in $\Omega$. By doing so, one can also relax some hypotheses on $\Omega$. Instead, throughout this section, we shall only assume that $\Omega$ is an open subset of $\mathbb{R}^n$. No assumptions on the boundedness or on the smoothness of the boundary of $\Omega$ are required. We denote the bottom of the spectrum of $-\Delta_p$ by $\lambda_{1,p}(\Omega)$. In the case that $\Omega$ is a bounded domain, $\lambda_{1,p}(\Omega)$ corresponds to the lowest eigenvalue of $-\Delta_p$ with Dirichlet boundary condition as defined in \eqref{var1}. In the general case, we write
\begin{equation}\label{gvar1}
\lambda_{1,p}(\Omega) = \inf_{u \in C^{\infty}_0(\Omega)}  \left \{  \frac{\int_\Omega |\nabla|^p dx}{\int_\Omega |u|^p dx} \right\}.
\end{equation}
Lieb's method to get such a bound is to allow a fixed fraction $\alpha \in (0,1)$ of the Lebesgue measure of the ball to stick out of $\Omega$. The result can be found in \cite[p. 446]{L}. It states that for a fixed $\alpha \in (0,1)$, if
$$\sigma_{n,p}(\alpha)= \lambda_{1,p}(B_r) \delta_n^{p/n}(\alpha^{-1/n}-1)^p,$$
where $\delta_n = \frac{r^n}{|B_r|}$, then
\begin{equation}\label{Lieb}
\lambda_{1,p}(\Omega) \geq \frac{\sigma_{n,p}(\alpha)}{(r^{L}_{\Omega,\alpha})^p}.
\end{equation}
Here $r^{L}_{\Omega,\alpha}=\sup\{r:\exists B_r\mbox{ such that } |B_r \setminus \Omega| \leq \alpha |B_r|\}$. Notice that the constant in the lower bound is not totally explicit since it depends on $\lambda_{1,p}(B_1)$, which is not known explicitly for $p\neq 2$.

Maz'ya and Shubin obtained a similar bound, but instead of using Lebesgue measure, they considered the Wiener capacity. The goal of this section is to generalize results of \cite{MS} to the $p$-Laplacian to cover the case where $p < n$. To do so, we mainly use results stated in \cite{Maz} and simplify the approach used in \cite{MS}, while losing the explicit constants in the bounds. Recall that $p$-capacity is defined for a compact set $F\subset \Omega$, where $n > 2$, as
\begin{equation}
\operatorname{cap}_p(F)= \inf_u\left \{  \int_{\mathbb{R}^n} |\nabla u|^p: u \in C^\infty_0(\Omega), u|_F \geq 1 \right \}.
\end{equation}
Fix $\gamma \in (0,1)$. A compact set $F \subset B_r$ is said to be $(p,\gamma)$-negligible if
\begin{equation}
\operatorname{cap}_p(F) \leq \gamma \operatorname{cap}_p(\closure {B}_r).
\end{equation}
We are ready to state the main result of this section and its corollaries:
\begin{theorem}\label{thm1}
If $1 < p \leq n$, there exist two positive constants $K_1(\gamma,n,p)$ and $K_2(\gamma,n,p)$ that depend only on $\gamma, n, p$ such that
\begin{equation}
K_1(\gamma,n,p) r_{\Omega,\gamma}^{-p} \leq \lambda_{1,p}(\Omega) \leq K_2(\gamma,n,p) r_{\Omega,\gamma}^{-p},
\end{equation}
where $r_{\Omega,\gamma} = \sup \left \{ r : \exists B_r, \closure{B}_r \setminus \Omega \mbox{ is } (p,\gamma)-\mbox{negligible} \right \}$ is the interior $p$-capacity radius.
\end{theorem}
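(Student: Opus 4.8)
The plan is to prove the two inequalities separately, in each case localizing to balls of radius comparable to $r_{\Omega,\gamma}$ and transferring information between the capacity of $\closure{B}_r\setminus\Omega$ and the $L^p$--Dirichlet energy balance by means of a capacitary Poincaré-type inequality drawn from Maz'ya \cite{Maz}. The governing identity throughout is the scaling of the $p$-capacity of a ball, namely $\operatorname{cap}_p(\closure{B}_r)\asymp r^{n-p}$ for $p<n$ and, at the borderline, the condenser capacity $\operatorname{cap}_p(\closure{B}_r,B_{2r})$ which is scale invariant; in both regimes $r^n/\operatorname{cap}_p\asymp r^p$, which is what produces the exponent $p$ in the final bound.

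For the lower bound, fix a radius $r>r_{\Omega,\gamma}$. By definition of the interior $p$-capacity radius, no ball of radius $r$ has $\closure{B}_r\setminus\Omega$ that is $(p,\gamma)$-negligible, so for \emph{every} $x\in\mathbb R^n$ one has $\operatorname{cap}_p(\closure{B}_r(x)\setminus\Omega)>\gamma\operatorname{cap}_p(\closure{B}_r)$. Given $u\in C_0^\infty(\Omega)$, the function $u$ vanishes identically on $\closure{B}_r(x)\setminus\Omega$, so Maz'ya's capacitary inequality — which controls $\int_{B_r(x)}|u|^p$ by the Dirichlet energy on $B_{2r}(x)$ divided by the $p$-capacity of the zero set — combined with the capacity scaling yields a local estimate of the form
\[
\int_{B_r(x)}|u|^p\,dx\;\le\;\frac{C(n,p)}{\gamma}\,r^p\int_{B_{2r}(x)}|\nabla u|^p\,dx .
\]
I would then cover $\mathbb R^n$ by balls $B_r(x_j)$ with uniformly bounded overlap (a lattice or Besicovitch covering, which also keeps the doubled balls $B_{2r}(x_j)$ of bounded overlap), sum the local inequalities, and divide by $\int_\Omega|u|^p$; invoking the variational characterization \eqref{gvar1} gives $\lambda_{1,p}(\Omega)\ge K_1(\gamma,n,p)\,r^{-p}$, and letting $r\downarrow r_{\Omega,\gamma}$ delivers the stated constant.

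For the upper bound, choose $r<r_{\Omega,\gamma}$ and, by definition of the supremum, a ball $B_r$ for which $\closure{B}_r\setminus\Omega$ is $(p,\gamma)$-negligible. I would take the first Dirichlet eigenfunction $\phi$ of $B_r$, extend it by zero, and multiply it by a cutoff $\eta$ that vanishes near $\closure{B}_r\setminus\Omega$, equals $1$ away from it, and has $\int|\nabla\eta|^p$ controlled by $\operatorname{cap}_p(\closure{B}_r\setminus\Omega)\le\gamma\operatorname{cap}_p(\closure{B}_r)$. Then $v=\eta\phi\in W_0^{1,p}(\Omega)$ is admissible; expanding $|\nabla v|^p$, the leading term reproduces $\int|\nabla\phi|^p$ while the cross term is absorbed by the smallness of the capacity, and the denominator $\int|v|^p$ remains comparable to $\int|\phi|^p$ since the perturbation lives on a set of small capacity. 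This produces $\lambda_{1,p}(\Omega)\le(1+o_\gamma(1))\,\lambda_{1,p}(B_r)=K_2(\gamma,n,p)\,r^{-p}$, and $r\uparrow r_{\Omega,\gamma}$ concludes. Note that the cruder estimate \eqref{monotonicite} does \emph{not} suffice here: since $r_{\Omega,\gamma}\ge\rho_\Omega$, the desired upper bound $K_2 r_{\Omega,\gamma}^{-p}$ is genuinely stronger than the inscribed-ball bound, so the test-function construction is essential.

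I expect the main obstacle to be the lower bound, and specifically invoking Maz'ya's capacitary Poincaré inequality in exactly the form required, with the correct dependence on the capacity of the zero set. The delicate point is the borderline exponent $p=n$, where the $p$-capacity relative to all of $\mathbb R^n$ degenerates and one must pass to the condenser capacity $\operatorname{cap}_p(\closure{B}_r,B_{2r})$; the verification that $r^n/\operatorname{cap}_p\asymp r^p$ persists in this regime, and that the negligibility condition is read off consistently, is where care is needed. A secondary technical issue is ensuring that the overlap constant of the covering is purely dimensional, so that $K_1$ depends only on $\gamma,n,p$ and not on $\Omega$.
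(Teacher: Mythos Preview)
Your lower bound is essentially the paper's proof: a capacitary Poincar\'e inequality on each ball of radius $r>r_{\Omega,\gamma}$, a bounded-overlap covering of $\mathbb{R}^n$, summation, and the limit $r\downarrow r_{\Omega,\gamma}$. The paper packages the local estimate as Lemma~\ref{L1}, part~1 (quoted from \cite[Theorem~14.1.2]{Maz}), and uses Rogers' bound \cite{R} for the covering multiplicity $N(n)$; you write the same inequality with a doubled ball on the right, which is an inessential variant. Your remarks about the borderline $p=n$ and the purely dimensional overlap constant are apt and are exactly the points the paper handles implicitly by citing Maz'ya.

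Your upper bound, however, takes a genuinely different route. The paper does \emph{not} build a test function; instead it invokes the reverse capacitary inequality (Lemma~\ref{L1}, part~2), which says that if $F=\closure{B}_r\setminus\Omega$ is $(p,\gamma)$-negligible and $u$ vanishes on $F$, then $\int_{\closure{B}_r}|\nabla u|^p\le C\,r^{-n}\operatorname{cap}_p(F)\int_{\closure{B}_r}|u|^p$, and then runs a covering-and-summation argument parallel to the lower bound. Your approach---take $\phi$ the ground state of $B_r$, multiply by a capacitary cutoff $\eta$, and test with $v=\eta\phi$---is closer to the original Maz'ya--Shubin argument \cite{MS} that the paper explicitly departs from. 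What your approach buys is a more transparent construction and, in principle, better control of the constants (the paper concedes its route ``has the disadvantage of not yielding an explicit constant''); what the paper's approach buys is formal symmetry between the two halves and a black-box appeal to \cite{Maz}. One point in your sketch that deserves a line of justification: the claim that $\int|v|^p$ stays comparable to $\int|\phi|^p$ does not follow from ``small capacity'' alone, since small capacity need not mean small measure; the standard fix is to bound $\int|1-\eta|^p$ on $B_r$ via a Poincar\'e-type inequality by $r^p\int|\nabla\eta|^p\lesssim r^p\operatorname{cap}_p(F)\le\gamma\,r^n$, which is a small fraction of $|B_r|$ when $\gamma$ is bounded away from~$1$.
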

A direct application of Theorem \ref{thm1} is the following:
\begin{corollary}\label{coro1}
 If $1 < p \leq n$, then $\lambda_{1,p}(\Omega) > 0 \iff r_{\Omega,\gamma} < + \infty$.
\end{corollary}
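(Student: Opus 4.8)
The plan is to prove Corollary \ref{coro1} as a direct consequence of the two-sided estimate in Theorem \ref{thm1}. The corollary asserts the equivalence $\lambda_{1,p}(\Omega) > 0 \iff r_{\Omega,\gamma} < +\infty$ for $1 < p \leq n$. Since Theorem \ref{thm1} provides positive constants $K_1(\gamma,n,p)$ and $K_2(\gamma,n,p)$ with
\[
K_1(\gamma,n,p)\, r_{\Omega,\gamma}^{-p} \leq \lambda_{1,p}(\Omega) \leq K_2(\gamma,n,p)\, r_{\Omega,\gamma}^{-p},
\]
the entire argument reduces to reading off both implications from this chain of inequalities, keeping careful track of the endpoint behaviour as $r_{\Omega,\gamma} \to 0$ or $+\infty$.

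For the forward direction, I would argue by contraposition: suppose $r_{\Omega,\gamma} = +\infty$. Then $r_{\Omega,\gamma}^{-p} = 0$, and the upper bound $\lambda_{1,p}(\Omega) \leq K_2(\gamma,n,p)\, r_{\Omega,\gamma}^{-p} = 0$ forces $\lambda_{1,p}(\Omega) = 0$ (the quantity is nonnegative by its variational definition \eqref{gvar1}). Hence $\lambda_{1,p}(\Omega) > 0$ implies $r_{\Omega,\gamma} < +\infty$. For the reverse direction, suppose $r_{\Omega,\gamma} < +\infty$. Then $r_{\Omega,\gamma}^{-p} > 0$ is a strictly positive finite number, and the lower bound gives $\lambda_{1,p}(\Omega) \geq K_1(\gamma,n,p)\, r_{\Omega,\gamma}^{-p} > 0$, since $K_1 > 0$. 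This closes the equivalence.

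The one point that genuinely requires care — and which I expect to be the only real obstacle — is the interpretation of the inequalities at the degenerate endpoint $r_{\Omega,\gamma} = +\infty$. One should confirm that Theorem \ref{thm1} is intended to hold (and does hold) in the extended-real sense, so that $r_{\Omega,\gamma}^{-p}$ is read as $0$ when $r_{\Omega,\gamma} = +\infty$, and that the upper estimate then genuinely collapses $\lambda_{1,p}(\Omega)$ to zero rather than becoming vacuous. Provided the conventions in Theorem \ref{thm1} accommodate this (which the setup of Section \ref{S3}, allowing arbitrary open $\Omega$ with no boundedness hypothesis, is designed to do), the corollary follows immediately. I would therefore state the proof compactly: invoke the upper bound to handle $r_{\Omega,\gamma} = +\infty \Rightarrow \lambda_{1,p}(\Omega) = 0$, and the lower bound to handle $r_{\Omega,\gamma} < +\infty \Rightarrow \lambda_{1,p}(\Omega) > 0$, remarking only that positivity of $K_1, K_2$ is what makes each implication sharp.
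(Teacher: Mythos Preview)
Your proposal is correct and matches the paper's own treatment: the paper simply introduces Corollary \ref{coro1} as ``a direct application of Theorem \ref{thm1}'' without further argument, and your write-up spells out exactly that application. Your attention to the degenerate endpoint $r_{\Omega,\gamma}=+\infty$ is appropriate and consistent with how the upper-bound argument in the paper would be read in the limit.
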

Corollary \ref{coro1} gives a necessary and sufficient condition of strict positivity of the operator $-\Delta_p$ with Dirichlet boundary conditions. For instance, let $\Omega$ be the complement of any Cartesian grid in $\mathbb{R}^{n\geq3}$ (this example is adapted from \cite[p. 789]{Maz}. Clearly, for any $\gamma \in (0,1)$, $r_{\Omega,\gamma}= +\infty$; thus, $\lambda_{1,p}(\Omega) =0$. However, if $\Omega$ is a narrow strip, $r_{\Omega,\gamma} <  +\infty$, implying that $ \lambda_{1,p}(\Omega)>0$.

 Note that since $\lambda_{1,p}(\Omega)>0$ does not depend on $\gamma$, we immediately get the following:
\begin{corollary}
 If $1 < p \leq n$, the conditions $ r_{\Omega,\gamma} < + \infty$ for different $\gamma$'s are all equivalent.
\end{corollary}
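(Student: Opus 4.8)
The plan is to use the first eigenvalue $\lambda_{1,p}(\Omega)$ as an intermediary, exploiting that it is an intrinsic spectral quantity of $-\Delta_p$ on $\Omega$ carrying no dependence on the auxiliary parameter $\gamma$. Since Corollary \ref{coro1} has already been established for \emph{each} fixed $\gamma \in (0,1)$, the strategy is simply to apply it twice, at two different values of $\gamma$, and chain the resulting equivalences through the single condition $\lambda_{1,p}(\Omega) > 0$.

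Concretely, I would fix two arbitrary values $\gamma_1, \gamma_2 \in (0,1)$ and invoke Corollary \ref{coro1} with $\gamma = \gamma_1$ to obtain $r_{\Omega,\gamma_1} < +\infty \iff \lambda_{1,p}(\Omega) > 0$, and again with $\gamma = \gamma_2$ to obtain $r_{\Omega,\gamma_2} < +\infty \iff \lambda_{1,p}(\Omega) > 0$. The middle term is literally the same statement in both lines, so concatenating them yields $r_{\Omega,\gamma_1} < +\infty \iff r_{\Omega,\gamma_2} < +\infty$. As $\gamma_1$ and $\gamma_2$ were arbitrary, all the finiteness conditions are equivalent, which is the assertion.

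It is worth recording why the more elementary, purely capacitary route does not close the argument on its own. If $\gamma_1 \leq \gamma_2$, then every $(p,\gamma_1)$-negligible compact set $F \subset B_r$ satisfies $\operatorname{cap}_p(F) \leq \gamma_1 \operatorname{cap}_p(\closure{B}_r) \leq \gamma_2 \operatorname{cap}_p(\closure{B}_r)$ and is therefore also $(p,\gamma_2)$-negligible; hence the families of admissible radii are nested and $r_{\Omega,\gamma_1} \leq r_{\Omega,\gamma_2}$. This monotonicity immediately yields the implication $r_{\Omega,\gamma_2} < +\infty \Rightarrow r_{\Omega,\gamma_1} < +\infty$, but the reverse implication for $\gamma_1 < \gamma_2$ would require a quantitative estimate comparing negligibility at two distinct capacity thresholds, precisely the comparison that Theorem \ref{thm1} and Corollary \ref{coro1} already package. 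Consequently there is no genuine obstacle: the substantive content was absorbed into Corollary \ref{coro1}, and the present statement is merely the observation that its right-hand side is $\gamma$-free.
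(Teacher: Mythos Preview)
Your proof is correct and matches the paper's own argument exactly: the paper simply remarks that since $\lambda_{1,p}(\Omega)>0$ does not depend on $\gamma$, Corollary~\ref{coro1} immediately yields the equivalence. Your additional paragraph on the monotonicity $r_{\Omega,\gamma_1}\le r_{\Omega,\gamma_2}$ for $\gamma_1\le\gamma_2$ is a nice clarification of why the spectral route is needed, but it is extra commentary beyond what the paper records.
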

Also, one can show that for $1 < p \leq n$, the lower bound given by Theorem \ref{thm1} implies the lower bound obtained earlier by Lieb, \eqref{Lieb}. To do so, one needs to use an isocapacity inequality that can be found in \cite[Section 2.2.3]{Maz}, stating that if $F$ be a compact subset of $\mathbb{R}^n$, then
\begin{equation}\label{mazeq}
\operatorname{cap}_p(F) \geq \omega^{p/n} n^{(n-p)/n}\left(\frac{n-p}{p-1}\right) |F|^{(n-p)/n},
\end{equation}
where equality occurs if and only if $F$ is a ball.
\begin{proposition}
If $\alpha = \gamma^{n/(n-p)}$ and $1<p\leq n$, then $r^{L}_{\Omega,\alpha} \geq r_{\Omega,\gamma}$. In particular, Theorem \ref{thm1} implies \eqref{Lieb}.
\end{proposition}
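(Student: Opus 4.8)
The plan is to show that every radius $r$ which witnesses the interior $p$-capacity radius also witnesses Lieb's radius, so that the two suprema satisfy $r^{L}_{\Omega,\alpha}\geq r_{\Omega,\gamma}$; the comparison of the two lower bounds then follows from the monotonicity of $t\mapsto t^{-p}$. Assume $1<p<n$ (the exponent $n/(n-p)$ forces this) and fix any $r>0$ for which $F:=\closure{B}_r\setminus\Omega$ is $(p,\gamma)$-negligible. Note that $F$ is compact and that $|F|=|B_r\setminus\Omega|$, since the two sets differ only by a subset of the sphere $\partial B_r$, which has Lebesgue measure zero.

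The key observation is that the isocapacity inequality \eqref{mazeq} holds with a universal constant that is attained precisely on balls. Writing $c=\omega^{p/n}n^{(n-p)/n}\left(\frac{n-p}{p-1}\right)$, inequality \eqref{mazeq} gives $\operatorname{cap}_p(F)\geq c\,|F|^{(n-p)/n}$, while its equality case applied to the ball yields $\operatorname{cap}_p(\closure{B}_r)=c\,|B_r|^{(n-p)/n}$. Combining these with the negligibility hypothesis $\operatorname{cap}_p(F)\leq\gamma\operatorname{cap}_p(\closure{B}_r)$, the constant $c$ cancels and we obtain
\begin{equation*}
|F|^{(n-p)/n}\leq\gamma\,|B_r|^{(n-p)/n}.
\end{equation*}
Raising both sides to the positive power $n/(n-p)$ gives $|B_r\setminus\Omega|=|F|\leq\gamma^{n/(n-p)}|B_r|=\alpha|B_r|$, so this same ball $B_r$ qualifies in the definition of $r^{L}_{\Omega,\alpha}$. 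Since $r$ was an arbitrary radius admissible for $r_{\Omega,\gamma}$, the admissible set for $r_{\Omega,\gamma}$ is contained in that for $r^{L}_{\Omega,\alpha}$, and taking suprema yields $r^{L}_{\Omega,\alpha}\geq r_{\Omega,\gamma}$.

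For the final assertion, $r^{L}_{\Omega,\alpha}\geq r_{\Omega,\gamma}$ implies $r_{\Omega,\gamma}^{-p}\geq(r^{L}_{\Omega,\alpha})^{-p}$, so the lower bound of Theorem \ref{thm1} gives
\begin{equation*}
\lambda_{1,p}(\Omega)\geq K_1(\gamma,n,p)\,r_{\Omega,\gamma}^{-p}\geq K_1(\gamma,n,p)\,(r^{L}_{\Omega,\alpha})^{-p},
\end{equation*}
which is exactly a bound of the form \eqref{Lieb}, albeit with the non-explicit constant $K_1(\gamma,n,p)$ in place of $\sigma_{n,p}(\alpha)$; this is consistent with the stated loss of explicit constants in this approach.

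The computation itself is elementary once the equality case of \eqref{mazeq} is invoked — the whole point is that the single universal constant $c$ appears on both sides and cancels, so no explicit value is needed. The only genuine subtlety I anticipate is the borderline exponent: both the relation $\alpha=\gamma^{n/(n-p)}$ and the division by $\frac{n-p}{p-1}$ degenerate as $p\to n$, so the argument as written covers $1<p<n$. The case $p=n$ must be read separately, since there $c$ vanishes and \eqref{mazeq} becomes vacuous, and one relies directly on Theorem \ref{thm1} rather than on a comparison with Lieb's radius.
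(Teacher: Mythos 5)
Your argument is correct and is essentially the same as the paper's: both apply the isocapacity inequality \eqref{mazeq} to $F=\closure{B}_r\setminus\Omega$, invoke the $(p,\gamma)$-negligibility hypothesis, and use the equality case of \eqref{mazeq} for the ball so that the constant cancels and $|F|\leq\alpha|B_r|$ follows, giving $r^{L}_{\Omega,\alpha}\geq r_{\Omega,\gamma}$. Your explicit remark that the argument degenerates at $p=n$ is a fair point that the paper glosses over, but it does not change the substance of the proof.
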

\begin{proof}
Let $C =  \omega^{p/n} n^{(n-p)/n}\left(\frac{n-p}{p-1}\right)$ and fix $\gamma \in ] 0,1 [$. Suppose that
$$ \operatorname{cap}_p( \closure{B}_r \setminus\Omega) \leq \gamma \operatorname{cap}_p(\closure{B}_r);$$
therefore, using \eqref{mazeq}, we get that
\begin{eqnarray*}
|\closure{B}_r \setminus \Omega| & \leq & C^{-\frac{n}{n-p}} \operatorname{cap}_p(\closure{B}_r \setminus \Omega) \\
& \leq & C^{-\frac{n}{n-p}} \gamma^{\frac{n}{n-p}}\operatorname{cap}_p(\closure{B}_r)^{\frac{n}{n-p}} \\
& = &  \alpha  |\closure{B}_r|,
\end{eqnarray*}
yielding the desired result.
\end{proof}

\subsection{Convex domains in $\mathbb R^n$}

Another way to avoid such difficulty is to consider convex domains in $\mathbb R^n$. Doing so, we can prove the following:

\begin{proposition}
If $\Omega$ be a convex body in $\mathbb R^n$, then the following inequality
\begin{equation}
\lambda_{1,p}(\Omega) \geq \left( \frac{1}{p\rho_\Omega} \right)^p,
\end{equation}
holds for all $p>1$:
\end{proposition}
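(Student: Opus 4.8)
The plan is to bound $\lambda_{1,p}(\Omega)$ from below by the $p$-th power of the Cheeger constant and then to estimate that constant geometrically using the inscribed ball. Since a convex body carries no topological obstruction, I would invoke the dimension-free Cheeger inequality for the $p$-Laplacian recorded in \cite{KF},
$$\lambda_{1,p}(\Omega)\ \geq\ \left(\frac{h(\Omega)}{p}\right)^{p},\qquad h(\Omega)=\inf_{D\subset\Omega}\frac{|\partial D|}{|D|},$$
where the infimum runs over relatively compact subdomains $D$ with smooth boundary (equivalently over sets of finite perimeter). Granting this, the whole proposition reduces to the single geometric inequality $h(\Omega)\geq 1/\rho_{\Omega}$: once it is established,
$$\lambda_{1,p}(\Omega)\ \geq\ \left(\frac{h(\Omega)}{p}\right)^{p}\ \geq\ \left(\frac{1}{p\,\rho_{\Omega}}\right)^{p}$$
is immediate for every $p>1$.

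To prove $h(\Omega)\geq 1/\rho_\Omega$, I would argue that among all competitors it suffices to test convex subdomains, and then handle those by slicing. For a convex $D\subseteq\Omega$ put $d(x)=\mathrm{dist}(x,\partial D)$, so that $|\nabla d|=1$ almost everywhere, $0\le d\le\rho_D$, and the superlevel sets $D_t=\{x\in D: d(x)>t\}$ form a nested family of convex inner parallel bodies. The coarea formula then gives
$$|D|=\int_{D}|\nabla d|\,dx=\int_{0}^{\rho_D}|\partial D_t|\,dt.$$
Because the $D_t$ are nested convex sets, monotonicity of surface area under convex inclusion yields $|\partial D_t|\leq|\partial D|$ for every $t$, whence $|D|\leq\rho_D\,|\partial D|\leq\rho_\Omega\,|\partial D|$, using $\rho_D\leq\rho_\Omega$. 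Thus $|\partial D|/|D|\geq 1/\rho_\Omega$ for every convex competitor, and therefore $h(\Omega)\geq 1/\rho_\Omega$.

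The delicate step --- and the one I expect to be the main obstacle --- is the reduction of the Cheeger infimum to convex competitors. Restricting to connected $D$ is harmless (a connected component of a minimizing set has a ratio no larger than the whole), but passing to convexity is genuinely nontrivial: the naive device of replacing $D$ by its convex hull can \emph{increase} the perimeter in $\mathbb{R}^n$, so one cannot argue componentwise. The clean way around this is to invoke the known fact that the Cheeger set of a convex body is itself convex, which legitimizes testing only convex $D$; alternatively one can try to build a vector field $X$ on $\Omega$ with $\operatorname{div}X\geq 1$ and $|X|\leq\rho_\Omega$ (for instance from the torsion function), since then $|D|\leq\int_D\operatorname{div}X=\int_{\partial D}X\cdot\nu\leq\rho_\Omega|\partial D|$ for every $D$ at once, bypassing convexity of the competitor entirely. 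Either route closes the gap, and the remaining computations are routine.
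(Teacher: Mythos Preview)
Your argument is correct, and the geometric step $|D|\le\rho_D\,|\partial D|$ for convex $D$ via inner parallel bodies is exactly the estimate one needs. Where you diverge from the paper is in how you justify restricting attention to convex competitors. You treat this as a statement about the Cheeger problem itself---either by quoting that the Cheeger set of a convex body is convex, or by a divergence-structure trick---whereas the paper never proves $h(\Omega)\ge 1/\rho_\Omega$ at all. Instead it revisits the proof of the Cheeger inequality (Lemma~\ref{CheegerLemma}) and observes that the isoperimetric ratio is only ever applied to the superlevel sets $\{u_1>t\}$ of the first eigenfunction; since $\Omega$ is convex, the Brascamp--Lieb concavity result \cite[Theorem~1.13]{BL} makes those particular sets convex, and the argument closes without touching general competitors. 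Your route yields the stronger, purely geometric conclusion $h(\Omega)\ge 1/\rho_\Omega$ (useful in its own right), at the price of importing a nontrivial fact about Cheeger sets; the paper's route stays internal to the spectral problem and trades that fact for Brascamp--Lieb. One caution on your second alternative: the torsion vector field $X=-\nabla u$ does satisfy $\operatorname{div}X=1$, but the bound $|\nabla u|\le\rho_\Omega$ is not automatic and would itself need justification for convex $\Omega$, so if you pursue that line you should either supply a reference or fall back on the Cheeger-set convexity.
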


The proof is based on two key facts (see \cite[p. 26]{O3}), namely that the inequality,
$$ |\partial \Omega | \geq h(\Omega) |\Omega|,$$
is required to be true only for subdomains bounded by level surfaces of the first eigenfunction of $\Omega$ (see the proof of Lemma \ref{CheegerLemma}), and that if $\Omega$ is convex, then those subdomains $\Omega'$ are also convex (see \cite[Theorem 1.13]{BL}). Recalling that $\frac{1}{\rho_{\Omega'}} \geq \frac{1}{\rho_\Omega}$, the proposition then follows easily.

\subsection{Further discussion}

A striking difference between the usual Laplace operator and the $p$-Laplacian is that it is possible to obtain bounds of the type \eqref{goal} in higher dimensions, as long as $p$ is "large enough" compared to the dimension. Indeed, W.K. Hayman's observation remains valid in the case $p \leq n-1$ since for such $p$, every curve has a  trivial $p$-capacity. On the other hand, for $p>n$, even a single point has a positive $p$-capacity (see \cite[Chapter 13, Proposition 5 and its corollary]{Maz}). Consequently, W. K. Hayman's counterexample no longer holds since removing a single point has an impact on the eigenvalues as it can be seen from \eqref{var1}. Taking the latter observation into account, it is possible to prove the following theorem:

\begin{theorem}\label{ThmPrincipal}
Let $p > n$ and let $\Omega$ be a bounded domain. Then there exists a positive constant $C_{n,p}$ such that
\begin{equation}\label{EqPrincipal}
\lambda_{1,p}(\Omega) \geq \dfrac{C_{n,p}}{\rho_{\Omega}^{p}}.
\end{equation}
\end{theorem}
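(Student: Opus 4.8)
\emph{The plan} is to prove the Poincaré-type inequality
\[
\int_\Omega |u|^p\,dx \;\le\; C'\,\rho_\Omega^{\,p}\int_\Omega |\nabla u|^p\,dx
\qquad\text{for all } u\in C_0^\infty(\Omega),
\]
with $C'=C'(n,p)$, and then read off \eqref{EqPrincipal} directly from the variational characterization \eqref{var1} by taking the infimum of the Rayleigh quotient over $C_0^\infty(\Omega)$: one gets $\lambda_{1,p}(\Omega)\ge (C')^{-1}\rho_\Omega^{-p}$, so $C_{n,p}=(C')^{-1}$. The engine of the whole argument is \emph{Morrey's embedding} $W^{1,p}\hookrightarrow C^{0,1-n/p}$, which is available precisely because $p>n$; this is exactly the quantitative counterpart of the remark in the discussion that for $p>n$ even a single point carries positive $p$-capacity. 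Since $u\in C_0^\infty(\Omega)$ extends by zero to an element of $W^{1,p}(\mathbb R^n)$, I may work with the extended function on all of $\mathbb R^n$.

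First I would establish the \textbf{local estimate}: if $B=B(x,R)$ is any ball and $w\in W^{1,p}(B)$ vanishes at some point $z\in B$, then for every $y\in B$ the local form of Morrey's inequality gives
\[
|w(y)|=|w(y)-w(z)|\le C(n,p)\,|y-z|^{\,1-n/p}\Big(\int_B|\nabla w|^p\Big)^{1/p}
\le C(n,p)\,(2R)^{\,1-n/p}\Big(\int_B|\nabla w|^p\Big)^{1/p}.
\]
Integrating the $p$-th power of the resulting $L^\infty$ bound over $B$ and using $|B|\sim R^n$ yields
\[
\int_B |w|^p\,dx \;\le\; |B|\,\|w\|_{L^\infty(B)}^p \;\le\; C(n,p)\,R^{\,p}\int_B |\nabla w|^p\,dx,
\]
where the powers combine as $R^n\cdot R^{\,p-n}=R^{\,p}$, giving the correct scaling.

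Next I would \textbf{globalize by a covering argument}. Fix $R=2\rho_\Omega$, so that $R>\rho_\Omega$, and cover $\mathbb R^n$ by balls $B_i=B(x_i,R)$ drawn from a scaled lattice so that the family has bounded overlap: each point of $\mathbb R^n$ lies in at most $N=N(n)$ of the $B_i$. The key geometric point is that, since $\Omega$ contains no ball of radius exceeding $\rho_\Omega$, \emph{no} ball $B_i$ of radius $R>\rho_\Omega$ can be contained in $\Omega$; hence each $B_i$ contains a point of $\mathbb R^n\setminus\Omega$, where the zero-extended $u$ vanishes. Applying the local estimate in every $B_i$, summing, and invoking bounded overlap together with $\operatorname{supp}\nabla u\subset\overline\Omega$ gives
\[
\int_\Omega |u|^p\,dx
\le \sum_i \int_{B_i}|u|^p\,dx
\le C(n,p)\,R^{\,p}\sum_i\int_{B_i}|\nabla u|^p\,dx
\le N(n)\,C(n,p)\,2^{\,p}\,\rho_\Omega^{\,p}\int_\Omega|\nabla u|^p\,dx,
\]
which is the desired inequality with $C'=N(n)\,C(n,p)\,2^{\,p}$.

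The step I expect to be the genuine obstacle is the \textbf{use of the hypothesis $p>n$}: Morrey's embedding, and with it the local estimate, fails for $p\le n$, which is consistent with the fact that Hayman's spiked-ball counterexample persists there. Everything else is bookkeeping: verifying the scaling of the local constant, constructing a bounded-overlap cover (a standard lattice or Besicovitch cover suffices), and checking that the zero-extension lies in $W^{1,p}(\mathbb R^n)$. I would also remark that one could instead phrase the bound through the interior $p$-capacity radius and compare it with $\rho_\Omega$ using the positivity of the capacity of points for $p>n$, but the Morrey--covering route keeps the constant $C_{n,p}$ dependent only on $n$ and $p$ with the least machinery.
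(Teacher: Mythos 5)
Your argument is correct. Note first that the paper itself contains no proof of Theorem \ref{ThmPrincipal}: it defers to \cite{Poli} (a paper in preparation) and only motivates the result by the observation that for $p>n$ even a single point carries positive $p$-capacity, which points toward a capacity-theoretic argument in the spirit of Theorem \ref{thm1} (stated there only for $1<p\le n$). Your route is different and more elementary. The local Morrey estimate on a ball $B$ of radius $R$, valid precisely for $p>n$ with a constant depending only on $n$ and $p$, gives $\int_B|w|^p\,dx\le C(n,p)\,R^p\int_B|\nabla w|^p\,dx$ whenever $w\in W^{1,p}(B)$ vanishes somewhere in $B$; the definition of $\rho_\Omega$ as a supremum guarantees that no ball of radius $2\rho_\Omega$ lies inside $\Omega$, so every ball of a bounded-overlap cover of $\mathbb{R}^n$ contains a zero of the (smooth) zero-extension of $u\in C_0^\infty(\Omega)$; summing and invoking \eqref{var1} yields \eqref{EqPrincipal} with $C_{n,p}$ depending only on $n$ and $p$. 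All steps check out, and your covering scheme is structurally the same as the one the paper uses for the lower bound of Theorem \ref{thm1}, with Morrey's inequality playing the role of the Maz'ya capacity lemma (Lemma \ref{L1}). What the capacity viewpoint buys in exchange for the heavier machinery is uniformity over the full range $1<p\le n$ and access to the refinement in Theorem \ref{CurPosCap} (the case $p>n-1$ with $\partial\Omega$ connected), where one needs curves, not merely points, to have positive capacity; Morrey's embedding cannot detect that. One cosmetic remark: the intermediate bound $|w(y)-w(z)|\le C\,|y-z|^{1-n/p}\bigl(\int_B|\nabla w|^p\bigr)^{1/p}$ should be quoted as the H\"older-seminorm form of Morrey's estimate on the ball $B$; since you only use it with $|y-z|\le 2R$ and the gradient integrated over all of $B$, nothing is lost.
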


It is known that if $p > n-1$, every curve has a positive $p$-capacity. Therefore, W. K. Hayman's counter example does not work in that case, leading to the following result:



\begin{theorem} \label{CurPosCap}
For $ p > n-1$, suppose that $\Omega$ is a bounded domain such that $\partial \Omega$ is connected, then there exists $C_{n,p}>0$ such that
$$\lambda_{1,p}(\Omega) \geq \frac{C_{n,p}}{\rho_\Omega^{p}}.$$
\end{theorem}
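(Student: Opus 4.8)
The plan is to split according to the size of $p$ and to reduce everything to the Maz'ya--Shubin type estimate of Theorem \ref{thm1}. For $p>n$ there is nothing to do, since Theorem \ref{ThmPrincipal} already gives the bound with no hypothesis on $\partial\Omega$. So the substance of the argument lies in the range $n-1<p\leq n$, where Theorem \ref{thm1} applies and yields $\lambda_{1,p}(\Omega)\geq K_1(\gamma,n,p)\,r_{\Omega,\gamma}^{-p}$ for every fixed $\gamma\in(0,1)$. Since $r_{\Omega,\gamma}\geq\rho_\Omega$ trivially, the whole problem collapses to proving a \emph{reverse} comparison: under the assumption that $\partial\Omega$ is connected, the interior $p$-capacity radius satisfies $r_{\Omega,\gamma}\leq c_{n,p}\,\rho_\Omega$ for a suitably small $\gamma$. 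Granting this, one immediately gets
$$
\lambda_{1,p}(\Omega)\geq K_1(\gamma,n,p)\,r_{\Omega,\gamma}^{-p}\geq K_1(\gamma,n,p)\,c_{n,p}^{-p}\,\rho_\Omega^{-p},
$$
which is the claim with $C_{n,p}=K_1(\gamma,n,p)\,c_{n,p}^{-p}$.

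The bound $r_{\Omega,\gamma}\leq c_{n,p}\rho_\Omega$ is where the hypothesis $p>n-1$ enters, and it is carried out by showing that for \emph{every} ball $\closure{B}_R$ with $R\geq 2\rho_\Omega$ the set $\closure{B}_R\setminus\Omega$ fails to be $(p,\gamma)$-negligible, which forces the supremum defining $r_{\Omega,\gamma}$ to be at most $2\rho_\Omega$. First I would argue that one may assume the ball is mostly contained in $\Omega$: if $\closure{B}_R\setminus\Omega$ already has large measure it has large capacity and we are done, so $B_R$ meets $\Omega$ and we may recenter at a point $x_0\in\Omega\cap B_R$. Because every point of $\Omega$ lies within $\rho_\Omega$ of $\partial\Omega$, there is a boundary point $z_0\in\partial\Omega$ with $|z_0-x_0|\leq\rho_\Omega<R$, so $z_0\in\closure{B}_R\setminus\Omega$. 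Now the connectedness of $\partial\Omega$ is exploited through the standard boundary--bumping lemma of continuum theory: either $\partial\Omega$ leaves $\closure{B}_R$, in which case the component of $\partial\Omega\cap\closure{B}_R$ containing $z_0$ reaches $\partial B_R$ and hence is a continuum $K\subset\closure{B}_R\setminus\Omega$ of diameter at least $R-\rho_\Omega\geq R/2$; or $\partial\Omega\subset B_R$, in which case $\closure\Omega$ is confined to a concentric ball and the outer spherical shell of $\closure{B}_R$ lies in the complement, again producing a continuum in $\closure{B}_R\setminus\Omega$ of diameter comparable to $R$. In every case $\closure{B}_R\setminus\Omega$ contains a continuum $K$ with $\operatorname{diam}(K)\geq R/4$.

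The final step is the capacity comparison. For $n-1<p<n$ a continuum of diameter $d$ has positive $p$-capacity with the lower estimate $\operatorname{cap}_p(K)\geq c_{n,p}\,d^{\,n-p}$ (this is exactly the positivity of the capacity of curves for $p>n-1$ recorded via \cite{Maz}, and is false for $p\leq n-1$), while $\operatorname{cap}_p(\closure{B}_R)\sim R^{\,n-p}$. By monotonicity of capacity,
$$
\operatorname{cap}_p(\closure{B}_R\setminus\Omega)\geq\operatorname{cap}_p(K)\geq c_{n,p}\,(R/4)^{\,n-p},
$$
so choosing $\gamma$ small enough that $\gamma\,\operatorname{cap}_p(\closure{B}_R)<c_{n,p}(R/4)^{n-p}$ (the ratio is scale invariant, so a single choice of $\gamma$ works for all $R$) shows $\closure{B}_R\setminus\Omega$ is not $(p,\gamma)$-negligible, giving $r_{\Omega,\gamma}\leq 2\rho_\Omega$ as required. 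The two points I expect to require the most care are the clean extraction of a large-diameter continuum from the connected boundary for a ball in arbitrary position relative to $\Omega$ (the recentering and the boundary-bumping step), and the borderline exponent $p=n$, where the power-law capacity estimates degenerate to logarithmic ones; there the same scheme applies using the conformal scale invariance of the $n$-capacity and the positivity of the $n$-capacity of continua, but the comparison must be phrased in terms of capacities of concentric ball pairs rather than of $\closure{B}_R$ against all of $\mathbb{R}^n$.
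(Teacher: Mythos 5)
The paper does not actually contain a proof of Theorem \ref{CurPosCap}: it states that the proofs of Theorems \ref{ThmPrincipal} and \ref{CurPosCap} are deferred to \cite{Poli}, so there is no in-paper argument to compare yours against. That said, your proposal follows what is almost certainly the intended route: dispose of $p>n$ via Theorem \ref{ThmPrincipal}, and for $n-1<p\leq n$ reduce to the capacitary bound of Theorem \ref{thm1} by showing that the connected boundary forces $r_{\Omega,\gamma}\lesssim\rho_\Omega$ for a suitable $\gamma=\gamma(n,p)$. The two analytic ingredients you invoke are correct and standard: the quantitative lower bound $\operatorname{cap}_p(K)\geq c_{n,p}\,(\operatorname{diam} K)^{n-p}$ for continua when $p>n-1$ (this is where the hypothesis on $p$ enters, and it is stronger than the paper's remark about curves, since a component of $\partial\Omega\cap\closure{B}_R$ is a continuum but need not be a curve), and the boundary-bumping lemma applied to the continuum $\partial\Omega$.

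Two steps need repair. First, the recentering is not legitimate as written: $r_{\Omega,\gamma}$ is a supremum over \emph{all} balls of a given radius, so you must show that the complement of the \emph{original} ball $\closure{B}_R(y)$ is non-negligible, whereas your continuum lives in the recentered ball $\closure{B}_R(x_0)$, which can protrude from $B_R(y)$ by as much as $R$ when $\Omega$ only grazes $B_R(y)$. The fix is routine but should be made explicit: either $\Omega\cap B_{R/2}(y)=\emptyset$, in which case $\closure{B}_{R/2}(y)\subset\closure{B}_R(y)\setminus\Omega$ carries a fixed fraction $2^{p-n}$ of $\operatorname{cap}_p(\closure{B}_R)$, or one picks $x_0\in\Omega\cap B_{R/2}(y)$ and runs the bumping argument inside $B_R(y)$ itself, getting a continuum of diameter $\geq R/4$ once $R\geq 4\rho_\Omega$ (your threshold $R\geq 2\rho_\Omega$ does not quite give a positive distance from $z_0$ to $\partial B_R(y)$). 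Second, the endpoint $p=n$ is more than a technicality you can wave at: the whole-space variational $p$-capacity used in the paper vanishes identically when $p=n$ (the paper's own explicit formula for $\operatorname{cap}_p(\closure{B}_r)$ excludes $p=n$), so $(n,\gamma)$-negligibility as literally defined is vacuous and Theorem \ref{thm1} cannot be cited verbatim there; one must rerun both Theorem \ref{thm1} and your continuum estimate with the condenser capacity $\operatorname{cap}_n(\cdot\,,B_{2R})$, as you suggest. This defect is inherited from the paper's formulation of Theorem \ref{thm1}, but since $p=n$ lies in the range of the theorem you are proving, your write-up must address it rather than defer it. With these two repairs the argument is complete.
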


This result can be viewed as a generalization of classical results for the Laplacian on simply connected planar domains ($p=n=2$) discussed earlier however, without the explicit constant. Also notice that the result can not hold if $p=n-1$ as noted by W. K. Hayman for the case $p=2, n=3$.

Theorems \ref{ThmPrincipal} and \ref{CurPosCap} were suggested by D. Bucur and their proofs can be found in \cite{Poli}.

\section{Lower bounds involving eigenvalues of the $p$-Laplace operator on manifolds}\label{S4}


\subsection{A two-sided inradius bound for nodal domains on closed Riemannian surfaces}

Some results concerning the inner radius of nodal domains on manifolds were obtained in \cite{M1, M2}. As suggested by D. Mangoubi, it is possible to extend a two sided estimate valid for closed surfaces, \cite[Theorem 1.2]{M1} to the case of the $p$-Laplacian. Let $U_{\lambda}$ denote the $\lambda$-nodal domain associated to the eigenfunction $u_{p,\lambda}$. The result is the following:
\begin{theorem}\label{Manthm}
Let $(S,g)$ be a closed surface. Then, there exists two positive constants $c, C$ such that
$$c \lambda^{-1/p} \leq \rho_{U_{\lambda}} \leq C \lambda^{-1/p}.$$
\end{theorem}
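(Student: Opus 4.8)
The plan is to prove the two inequalities separately, treating $\lambda$ as large (the finitely many small eigenvalues can be absorbed into the constants). Throughout, $U_\lambda$ is a connected component of $\{u_{p,\lambda}\neq 0\}$, so $u_{p,\lambda}$ has a fixed sign there and vanishes on $\partial U_\lambda$; since a sign-definite eigenfunction can only be a first eigenfunction, the restriction $u_{p,\lambda}|_{U_\lambda}$ realizes the infimum in \eqref{var1} on $U_\lambda$, whence $\lambda_{1,p}(U_\lambda)=\lambda$. The upper and lower bounds will follow from, respectively, a domain-monotonicity argument and a Harnack-type rescaling around the maximum point, modeled on the Laplacian estimate of \cite{M1}.

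For the upper bound $\rho_{U_\lambda}\le C\lambda^{-1/p}$ I would invoke the manifold analogue of Proposition \ref{zeros}. Because $S$ is closed it has a positive injectivity radius and uniformly bounded geometry, so for $\lambda$ large every geodesic ball $B_r$ with $r\le(C/\lambda)^{1/p}$ is bi-Lipschitz close to a Euclidean ball; hence by domain monotonicity \eqref{monotonicite} and scaling one has $\lambda_{1,p}(B_r)\le C' r^{-p}$, giving the manifold version of Proposition \ref{zeros}. As $u_{p,\lambda}$ does not vanish in the interior of $U_\lambda$, that proposition forbids $U_\lambda$ from containing any geodesic ball of radius exceeding $(C/\lambda)^{1/p}$, so $\rho_{U_\lambda}\le C\lambda^{-1/p}$ directly.

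The substance of the theorem is the lower bound $\rho_{U_\lambda}\ge c\lambda^{-1/p}$. Let $x_0\in U_\lambda$ be a point where $|u_{p,\lambda}|$ attains its maximum $\mu$ over $\closure{U}_\lambda$, and work in geodesic normal coordinates centred at $x_0$. I would rescale by $y=\lambda^{1/p}x$ and $v(y)=u_{p,\lambda}(x)/\mu$, so that $v$ solves an equation of the form $\Delta_p^{g_\lambda}v+|v|^{p-2}v=0$ for a rescaled metric $g_\lambda$ with $v(0)=1$, $0\le v\le 1$, and $g_\lambda$ converging in $C^1$ to the Euclidean metric as $\lambda\to\infty$, uniformly in $x_0$, by bounded geometry. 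Since $\lambda r^p=O(1)$ at unit scale, the quasilinear Harnack inequality of Serrin--Trudinger and DiBenedetto type for nonnegative solutions applies with a constant depending only on $p$ and on the now-universal ellipticity of $g_\lambda$: from $\sup_{B_1}v\ge v(0)=1$ it yields $\inf_{B_{r_0}}v\ge 1/C_0$ on a fixed ball $B_{r_0}(0)$. Undoing the scaling, $u_{p,\lambda}$ keeps its sign on the geodesic ball of radius $r_0\lambda^{-1/p}$ about $x_0$, which is therefore contained in $U_\lambda$, giving $\rho_{U_\lambda}\ge r_0\lambda^{-1/p}$.

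I expect the lower bound to be the main obstacle, and specifically the task of making the Harnack step quantitative and uniform. Two points require care: first, that the regularity and Harnack theory for the $p$-Laplacian eigenvalue equation can be applied with constants independent of $\lambda$ and of the base point $x_0$, which is precisely where the compactness of $S$ and the $C^1$-convergence $g_\lambda\to$ Euclidean are essential; and second, that the rescaled equation genuinely has the stated form, so one must track how the curvature terms enter $\Delta_p^{g_\lambda}$ and verify they are lower order at scale $\lambda^{-1/p}$. The crux is to show that $v(0)=1$ forces a definite lower bound on a \emph{fixed-size} ball rather than one shrinking with $\lambda$; once this is secured, the two inequalities combine to yield the stated sandwich with constants $c,C$ depending only on $(S,g)$ and $p$.
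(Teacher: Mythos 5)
Your upper bound is fine and is the same easy half as in the paper: if a geodesic ball $B_\rho$ lies in $U_\lambda$, then $\lambda=\lambda_{1,p}(U_\lambda)\le\lambda_{1,p}(B_\rho)\le C\rho^{-p}$ by domain monotonicity and the bounded geometry of the closed surface, i.e.\ the manifold version of Proposition \ref{zeros}. The gap is in the lower bound. The Harnack inequality you invoke (Theorem \ref{Harnack}, and likewise the Serrin--Trudinger--DiBenedetto versions for quasilinear equations) applies only to solutions that are nonnegative on the \emph{entire} ball or cube on which the two-sided estimate is asserted. Your rescaled function $v$ is known to be nonnegative only on the rescaled copy of $U_\lambda$; on the fixed ball $B_{r_0}(0)$ in blown-up coordinates it may change sign unless the geodesic ball $B_{r_0\lambda^{-1/p}}(x_0)$ is already contained in $U_\lambda$ --- which is exactly the conclusion you are trying to reach. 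So the application of Harnack at the maximum point is circular. Passing to $v^+$ does not repair this: $v^+$ is a subsolution, and subsolutions obey the local maximum principle but not the weak Harnack (infimum) estimate, so no pointwise lower bound on a fixed ball follows.

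A sanity check confirming that something essential is missing: nothing in your blow-up argument uses $n=2$ (bounded geometry, rescaling, and Harnack are available in all dimensions), yet already for $p=2$ and $n\ge 3$ the bound $\rho_{U_\lambda}\ge c\lambda^{-1/2}$ is not known and the best lower bounds have strictly worse exponents. The genuinely two-dimensional input is what the paper uses instead, following \cite{M1}: the Faber--Krahn inequality applied to $U_\lambda$ (valid for the $p$-Laplacian) gives an area lower bound $|U_\lambda|\ge c\lambda^{-2/p}$, and the Poincar\'e-type Lemma \ref{PoinCare}, for functions vanishing on a curve whose projection onto an edge of a square has definite length, is applied on a covering of $U_\lambda$ by squares of side comparable to the putative inradius. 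If $\rho_{U_\lambda}$ were much smaller than $\lambda^{-1/p}$, the nodal set would meet each such square in a curve with a substantial projection, Lemma \ref{PoinCare} would hold in each square with $a\ll\lambda^{-1/p}$, and summing over the squares would force $\lambda\ge C(\alpha)^{-1}a^{-p}$, a contradiction. This step exploits the topology of nodal curves in the plane and has no counterpart in your argument; to fix the proof you must either independently establish nonnegativity of $v$ on a fixed ball, or switch to this Poincar\'e/covering route.
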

The proof of the case $p=2$ relies on two main tools, namely Faber-Krahn, which still holds for the $p$-Laplace operator, and a Poincaré inequality, \cite[Theorem 2.4]{M1}. The latter can also be generalized to our setting. We have the following result:
\begin{lemma}\label{PoinCare}
Let $Q\subseteq \mathbb{R}^2$ be a cube whose edge is of length $a$. Fix $\alpha \in (0,1)$ and let $u$ be in $C^\infty_0(Q)$ such that $u$ vanishes on a curve whose projection on one of the edges of Q has size $\geq \alpha a.$ Then, there exists a constant $C(\alpha)$ such that
$$ \int_Q |u|^p dx \leq C(\alpha) a^p \int_Q |\nabla u|^p dx.$$
\end{lemma}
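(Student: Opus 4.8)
The plan is to reduce to the unit square by scaling and then establish the inequality by slicing the cube in the two coordinate directions. Since both sides of the claimed inequality transform by the same factor $a^{2}$ under the rescaling $u(x)\mapsto u(ax)$ (the $a^{p}$ on the right is exactly compensated by the $a^{2-p}$ scaling of the Dirichlet integral), it suffices to treat the unit square $Q=[0,1]^{2}$, where the hypothesis becomes $|P|\ge\alpha$ with $P\subseteq[0,1]$ the projection onto the $x_1$-axis of the zero curve $\Gamma=\{u=0\}$ (the case of projection onto a vertical edge is identical). For each $x_1\in P$ I fix a height $y(x_1)\in[0,1]$ with $u(x_1,y(x_1))=0$.

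First I would control $u$ on the vertical strip over $P$. For $x_1\in P$ the fundamental theorem of calculus gives $u(x_1,x_2)=\int_{y(x_1)}^{x_2}\partial_{2}u(x_1,t)\,dt$, so Hölder's inequality together with $|x_2-y(x_1)|\le 1$ yields $|u(x_1,x_2)|^{p}\le\int_{0}^{1}|\partial_{2}u(x_1,t)|^{p}\,dt$. Integrating in $x_2$ and then over $x_1\in P$ produces the slice bound
\begin{equation}\label{slice}
\int_{P}\int_{0}^{1}|u|^{p}\,dx_2\,dx_1\le\int_{Q}|\nabla u|^{p}\,dx,
\end{equation}
which needs no measurability of $x_1\mapsto y(x_1)$ since that height does not appear on the right.

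The main point—and the step I expect to be the real obstacle—is passing from the good strip $P\times[0,1]$ to the full square, that is, handling the columns $x_1\notin P$ where no zero of $u$ is available. I would resolve this by a horizontal comparison: for any $x_1,x_1'\in[0,1]$ and any $x_2$, writing $u(x_1,x_2)=u(x_1',x_2)+\int_{x_1'}^{x_1}\partial_{1}u(s,x_2)\,ds$ and applying the convexity bound $(A+B)^{p}\le 2^{p-1}(A^{p}+B^{p})$ together with Hölder on the line integral gives
\begin{equation}
|u(x_1,x_2)|^{p}\le 2^{p-1}\Big(|u(x_1',x_2)|^{p}+\int_{0}^{1}|\partial_{1}u(s,x_2)|^{p}\,ds\Big).
\end{equation}
Averaging this over $x_1'\in P$ (legitimate because $|P|\ge\alpha>0$ and the left side is independent of $x_1'$) and then integrating over $(x_1,x_2)\in Q$ bounds $\int_{Q}|u|^{p}$ by $2^{p-1}\big(|P|^{-1}\int_{P}\int_{0}^{1}|u|^{p}+\int_{Q}|\partial_1 u|^{p}\big)$; inserting \eqref{slice} and $|P|\ge\alpha$ then yields the unit-square inequality $\int_{Q}|u|^{p}\le 2^{p-1}(\alpha^{-1}+1)\int_{Q}|\nabla u|^{p}$.

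Finally I would undo the scaling, which reinstates the factor $a^{p}$ and leaves a constant $C(\alpha)=2^{p-1}(\alpha^{-1}+1)$ depending only on $\alpha$ and the fixed exponent $p$. An alternative route is a compactness/contradiction argument built on the Rellich--Kondrachov embedding $W^{1,p}(Q)\hookrightarrow\hookrightarrow L^{p}(Q)$, but there the delicate point is transferring the vanishing-on-a-curve condition to the weak limit, whose zero curve need not be the limit of the $\Gamma_k$; the direct slicing argument above avoids this difficulty and has the advantage of producing an explicit constant.
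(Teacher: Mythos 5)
Your argument is correct: the rescaling to the unit square, the vertical slice bound over the projection $P$ via the fundamental theorem of calculus and H\"older, and the horizontal comparison with averaging over $x_1'\in P$ all check out, yielding the explicit constant $2^{p-1}(\alpha^{-1}+1)$. This is essentially the same proof the paper intends — it omits details and defers to Mangoubi's slicing argument for the case $p=2$, of which your proof is the direct $L^p$ adaptation.
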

\noindent Since the proof of  Theorem \ref{Manthm} and Lemma \ref{PoinCare} are direct extensions of what is done in \cite{M1}, we omit details.


\subsection{Lower bounds involving the inradius on surfaces}

We can adapt some inradius results of \cite{O1} valid for simply connected domains of surfaces $(S,g)$ with controlled Gaussian curvature to the case of the $p$-Laplacian:
\begin{proposition}\label{OssCroP3}
Let $S$ be a surface. Let $D \subset S$ be a simply connected domain. Denote by $K$ its Gaussian curvature and by $\beta$ the quantity $\int_D K^+,$ where $K^+=\max\{K,0\}$. If $\beta \leq 2\pi$ the inequality,
\begin{equation}\label{OssCro31}
\lambda_{1,p}(D) \geq \left( \frac{1}{p \ \rho_{D}}\right)^p,
\end{equation}
holds.
\end{proposition}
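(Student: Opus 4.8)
The plan is to reduce Proposition~\ref{OssCroP3} to an application of the modified Cheeger inequality, Lemma~\ref{CheegerLemma}, by producing a lower bound on the Cheeger constant $h_k(D)$ in terms of the inradius $\rho_D$. Since $D$ is simply connected, we have $k=1$, and by Lemma~\ref{CheegerLemma} it suffices to show that
\begin{equation}\label{plan-cheeger}
h_1(D) = \inf_{D' \in F_1} \frac{|\partial D'|}{|D'|} \geq \frac{1}{\rho_D},
\end{equation}
since raising \eqref{plan-cheeger} to the $p$-th power and dividing by $p^p$ yields exactly \eqref{OssCro31}. Thus the $p$-dependence is entirely absorbed by Lemma~\ref{CheegerLemma}, and the core of the argument is a purely geometric isoperimetric-type estimate on the curvature-controlled surface $S$, independent of $p$.

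The key step is therefore an isoperimetric inequality valid on surfaces with $\int_{D'} K^+ \leq 2\pi$. First I would recall the relevant sharp inequality from Osserman's work \cite{O1}: for a simply connected domain $D'$ on a surface with $\int_{D'} K^+ = \beta \leq 2\pi$, one has a bound of the form
\begin{equation}\label{plan-iso}
|\partial D'|^2 \geq 2(2\pi - \beta)\,|D'| \geq 2\pi\,|D'|,
\end{equation}
where the last inequality uses $\beta \leq 2\pi$; more precisely one uses the Bol--Fiala / Osserman refinement that controls $|\partial D'|$ from below by the area together with the total positive curvature. The second ingredient is the standard comparison between the length of the boundary and the inradius, namely that for any such $D'$ the boundary length dominates a fixed multiple of $|D'|/\rho_{D'}$, combined with the monotonicity $\rho_{D'} \leq \rho_D$. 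Assembling these gives \eqref{plan-cheeger}. Concretely, one argues that every subdomain $D'$ bounded by a level set of the eigenfunction inherits the simple connectivity and curvature bound, so that the isoperimetric estimate applies uniformly across the family $F_1$, producing the desired infimum bound.

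The main obstacle I anticipate is verifying that the curvature hypothesis $\int_D K^+ \leq 2\pi$ passes to the relevant subdomains $D' \subset D$ and that one genuinely obtains the constant $1/\rho_D$ rather than a weaker multiple. Since $D' \subseteq D$, monotonicity of the positive part gives $\int_{D'} K^+ \leq \int_D K^+ \leq 2\pi$, so the hypothesis is preserved; this is the point where the threshold $2\pi$ is used, as it is exactly the borderline at which the isoperimetric deficit $2\pi - \beta$ stays nonnegative. The delicate part is matching the geometric constant in \eqref{plan-iso} with the inradius to recover precisely $h_1(D) \geq \rho_D^{-1}$; this requires choosing the sharp form of the isoperimetric inequality on curvature-bounded surfaces and controlling how $|\partial D'|/|D'|$ degenerates as $D'$ fills out a largest inscribed geodesic disc of radius $\rho_D$. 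As the proof is a direct adaptation of the planar argument underlying Theorem~\ref{OssCroP1} to the surface setting via \cite{O1}, I would expect the remaining computations to be routine once \eqref{plan-iso} is in place.
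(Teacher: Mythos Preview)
Your overall strategy is exactly the paper's: reduce via Lemma~\ref{CheegerLemma} to the purely geometric claim $h_1(D)\geq 1/\rho_D$, and prove that by applying a surface isoperimetric inequality to each subdomain $D'\in F_1$, using $\int_{D'}K^+\leq\int_D K^+\leq 2\pi$ and $\rho_{D'}\leq\rho_D$. That part is sound.

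The gap is in the geometric inequality you invoke. The inequality $|\partial D'|^2\geq 2(2\pi-\beta)|D'|$ does not involve the inradius at all, and it degenerates to a triviality at the threshold $\beta=2\pi$ (also, the step ``$2(2\pi-\beta)|D'|\geq 2\pi|D'|$'' requires $\beta\leq\pi$, not $\beta\leq 2\pi$). You then appeal to an unspecified ``standard comparison between the length of the boundary and the inradius'' and concede that matching the constant to obtain exactly $h_1(D)\geq\rho_D^{-1}$ is the delicate part you have not carried out. That is precisely the content of the proof, so this is a genuine gap rather than a routine computation.

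The paper closes this gap cleanly by citing a single Bonnesen-type inequality due to Burago and Zalgaller \cite{BZ}, valid for simply connected domains $D'$ with $\int_{D'}K^+=\beta'\leq 2\pi$:
\[
\rho_{D'}\,|\partial D'|\;\geq\;|D'|+\Bigl(\pi-\tfrac{1}{2}\beta'\Bigr)\rho_{D'}^{2}.
\]
Since $\beta'\leq\beta\leq 2\pi$ the second term is nonnegative, so $|\partial D'|/|D'|\geq 1/\rho_{D'}\geq 1/\rho_D$, and taking the infimum over $D'$ gives $h_1(D)\geq 1/\rho_D$ with the exact constant. The point is that the relevant inequality is a \emph{Bonnesen-type} refinement (linear in $|\partial D'|$ and already involving $\rho_{D'}$), not the quadratic isoperimetric inequality you wrote; once you replace your \eqref{plan-iso} by the Burago--Zalgaller inequality, your proof goes through verbatim.
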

\noindent If the surface has a negative Gaussian curvature, then we have the following:
\begin{proposition}\label{OssCroP4}
Let $S$ be a simply connected surface. Let $D \subset S$ be a simply connected domain such that $K \leq -\alpha^2$ on $D, \ \alpha > 0,$  where $K$ stands for its Gaussian curvature.
The stronger inequality
\begin{equation}\label{OssCro4}
\lambda_{1,p}(D) \geq \left(\frac{\alpha \coth(\alpha \rho_D)}{p}\right)^p
\end{equation}
holds.

\noindent Furthermore, if $S$ is a complete surface, then one has the following,
\begin{equation}\label{OssCro5}
\lambda_{1,p}(D) \geq \left(\frac{\alpha \coth(\alpha R_D)}{p}\right)^p,
\end{equation}
where $R_D$ stands for the circumradius of $D$.
\end{proposition}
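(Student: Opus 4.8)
The plan is to follow the classical Osserman approach for the Laplacian (\cite{O1}) and adapt it to the $p$-Laplacian, exploiting the fact that the two key ingredients are already available in our setting: the modified Cheeger-type estimate of Lemma~\ref{CheegerLemma}, namely $\lambda_{1,p}(D) \geq \bigl(h_k(D)/p\bigr)^p$, and comparison theorems for geodesic balls on surfaces of pinched curvature. Since $D$ is simply connected, in Lemma~\ref{CheegerLemma} we take $k=1$, so the Cheeger constant $h_1(D)$ is computed over relatively compact, simply connected subdomains $D' \subset D$ with smooth boundary. The whole problem therefore reduces to finding a sharp lower bound for the isoperimetric ratio $|\partial D'|/|D'|$ over such subdomains, in terms of the geometric quantity $\alpha$ and the inradius $\rho_D$ (respectively the circumradius $R_D$).

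First I would establish the pointwise isoperimetric inequality on a surface with $K \leq -\alpha^2$. For a simply connected subdomain $D'$ with smooth boundary, the Gauss--Bonnet theorem gives $\int_{D'} K \, dA + \int_{\partial D'} k_g \, ds = 2\pi$, and combining this with the curvature bound $K \leq -\alpha^2$ yields a reverse isoperimetric-type control. The cleanest route, which is the one Osserman uses, is to compare $D'$ with a geodesic disc in the model space of constant curvature $-\alpha^2$ (the hyperbolic plane scaled by $\alpha$). In that model the disc of geodesic radius $r$ satisfies $|\partial B_r|/|B_r| = \alpha \coth(\alpha r) \cdot \frac{\alpha \sinh(\alpha r)}{\cosh(\alpha r)-1}$; more usefully, one shows that any simply connected region on $S$ satisfies
\begin{equation}
\frac{|\partial D'|}{|D'|} \geq \alpha \coth(\alpha \rho_{D'}),
\end{equation}
since the right-hand side is the infimum of the ratio over geodesic balls and the curvature comparison forces $D'$ to be no more efficient than a model ball of the same inradius. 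Because $\rho_{D'} \leq \rho_D$ and $x \mapsto \alpha\coth(\alpha x)$ is decreasing, this gives $|\partial D'|/|D'| \geq \alpha\coth(\alpha\rho_D)$ uniformly over the admissible family, hence $h_1(D) \geq \alpha\coth(\alpha\rho_D)$. Feeding this into Lemma~\ref{CheegerLemma} produces exactly \eqref{OssCro4}.

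For the second, stronger assertion \eqref{OssCro5}, I would invoke the hypothesis that $S$ is a \emph{complete} simply connected surface with $K \leq -\alpha^2$, so that the Cartan--Hadamard theorem applies and $S$ is diffeomorphic to $\mathbb{R}^2$ with geodesics minimizing globally. In this situation one can enclose $D$ itself in a geodesic ball of radius $R_D$ (the circumradius) and push the isoperimetric comparison all the way up to that scale, replacing $\rho_D$ by $R_D$ in the curvature-comparison estimate for the Cheeger constant; the completeness is what guarantees the comparison ball actually exists and that the exponential map is a diffeomorphism, validating the global volume and perimeter comparisons. The rest is identical: apply Lemma~\ref{CheegerLemma} with $h_1(D) \geq \alpha\coth(\alpha R_D)$.

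The main obstacle I anticipate is the isoperimetric comparison step rather than the passage through the Cheeger inequality, which is purely formal once the geometric bound is in hand. Specifically, one must justify that restricting the Cheeger infimum to simply connected subdomains legitimately allows the Gauss--Bonnet computation (this is exactly the point emphasized after Lemma~\ref{CheegerLemma}, that connectivity is preserved), and one must handle subdomains whose boundary is not connected or whose shape is far from a geodesic disc. The standard fix, following \cite{O1}, is to note that it suffices to verify $|\partial D'| \geq h(D)|D'|$ only for subdomains bounded by level curves of the first eigenfunction, and that for such level-set domains on a Cartan--Hadamard surface the curvature comparison is clean. I would therefore carry out the comparison for level-set subdomains and invoke the coarea/rearrangement argument implicit in the proof of Lemma~\ref{CheegerLemma} to reduce to that case, so that the only genuinely new input relative to the $p=2$ theory is the replacement of the exponent $2$ by $p$ inside the Cheeger bound, which Lemma~\ref{CheegerLemma} already supplies.
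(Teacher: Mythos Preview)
Your approach is essentially the one taken in the paper: apply Lemma~\ref{CheegerLemma} with $k=1$, bound $h_1(D)$ from below using the isoperimetric inequality $|\partial D'|/|D'| \geq \alpha\coth(\alpha\rho_{D'})$ on simply connected subdomains (which the paper obtains by direct citation of Ionin \cite{I} and Osserman \cite[Theorem~8]{O3} rather than by sketching a Gauss--Bonnet/comparison derivation), and then use the monotonicity $\rho_{D'}\leq\rho_D$ (respectively $R_{D'}\leq R_D$) together with the fact that $x\mapsto\coth(\alpha x)$ is decreasing. Your worries about disconnected boundaries and the reduction to level-set subdomains are unnecessary here, since the cited isoperimetric inequalities already hold for arbitrary simply connected subdomains with smooth boundary.
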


\subsection{Negatively curved manifolds}

The first result, called McKean's theorem (see \cite{McK}) in the case of the Laplace operator, concerns manifolds of negative sectional curvature:
\begin{proposition}\label{McKean}
Let $(M,g)$ be a complete and simply connected Riemannian manifold. Let $D\subset M$ be a domain such that its sectional curvature is bounded by $\leq -\alpha^2, \alpha>0$ , then
\begin{equation}
\lambda_{1,p}(D) \geq \frac{(n-1)^p\alpha^p}{p^p}.
\end{equation}
\end{proposition}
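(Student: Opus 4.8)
The plan is to mimic McKean's original variational argument for the Laplacian, replacing the single use of the Cauchy--Schwarz inequality by an application of Hölder's inequality tuned to the exponent $p$. Since $(M,g)$ is complete, simply connected and has nonpositive sectional curvature, it is a Cartan--Hadamard manifold; fixing a base point $o$, the distance function $r(x)=d(x,o)$ is smooth on $M\setminus\{o\}$ and satisfies $|\nabla r|=1$. The curvature hypothesis $K\le-\alpha^2$ feeds into the Hessian (Laplacian) comparison theorem to give the pointwise lower bound
\[
\Delta r \;\ge\; (n-1)\,\alpha\coth(\alpha r)\;\ge\;(n-1)\,\alpha
\]
on $M\setminus\{o\}$, and this estimate is the geometric input that carries the whole proof.

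First I would take an arbitrary $u\in C_0^\infty(D)$ and test the inequality $(n-1)\alpha\le\Delta r$ against $|u|^p$, integrating over $D$. Integrating by parts (the boundary terms vanish because $u$ is compactly supported, and the mild singularity of $\nabla r$ at $o$ is removed by excising a shrinking ball $B_\varepsilon(o)$ whose contribution tends to $0$), and using $\nabla(|u|^p)=p\,|u|^{p-2}u\,\nabla u$ together with $|\nabla r|=1$, yields
\[
(n-1)\,\alpha\int_D |u|^p
\;\le\; p\int_D |u|^{p-1}\,|\nabla u|.
\]
Next I would apply Hölder's inequality with conjugate exponents $p/(p-1)$ and $p$ to the right-hand side, obtaining
\[
\int_D |u|^{p-1}|\nabla u|
\;\le\;\Bigl(\int_D |u|^p\Bigr)^{\!\frac{p-1}{p}}\Bigl(\int_D |\nabla u|^p\Bigr)^{\!\frac1p}.
\]
Combining the two displays, cancelling the common factor $\bigl(\int_D|u|^p\bigr)^{(p-1)/p}$, and raising to the $p$-th power gives
\[
(n-1)^p\alpha^p\int_D |u|^p \;\le\; p^p\int_D |\nabla u|^p,
\]
which is exactly the claimed bound after dividing and taking the infimum over $u$ in the Rayleigh quotient characterizing $\lambda_{1,p}(D)$.

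The computation is otherwise elementary, so the only genuine point requiring care is the Laplacian comparison estimate for $\Delta r$ and the treatment of its singularity at the base point $o$. I expect this to be the main obstacle: one must invoke the comparison theorem on a Cartan--Hadamard manifold (where $r$ has no cut locus and is globally smooth away from $o$) and justify the integration by parts across the single singular point. The cleanest route is to remove a geodesic ball $B_\varepsilon(o)$, apply the divergence theorem on $D\setminus B_\varepsilon(o)$, and let $\varepsilon\to0$; since the boundary integral over $\partial B_\varepsilon(o)$ is $O(\varepsilon^{n-1})$ it disappears in the limit, leaving the inequality intact. Note that the sharper pointwise bound $\Delta r\ge (n-1)\alpha\coth(\alpha r)$ is not needed here --- only its constant lower bound $(n-1)\alpha$ --- which is why the resulting constant is independent of the diameter of $D$.
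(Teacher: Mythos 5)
Your argument is correct, but it takes a genuinely different route from the paper's. The paper obtains Proposition \ref{McKean} by combining Lemma \ref{CheegerLemma} (the $p$-Cheeger inequality $\lambda_{1,p}(D)\ge (h(D)/p)^p$) with Yau's linear isoperimetric inequality $|\partial D'|\ge (n-1)\alpha\,|D'|$ for subdomains of a Cartan--Hadamard manifold with $K\le-\alpha^2$, cited from Osserman; the constant $(n-1)^p\alpha^p/p^p$ then falls out at once. You instead run McKean's original variational argument directly: the Laplacian comparison bound $\Delta r\ge(n-1)\alpha\coth(\alpha r)\ge(n-1)\alpha$, integration by parts against $|u|^p$, and H\"older with exponents $p$ and $p/(p-1)$. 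In effect you replace the coarea-formula and isoperimetric step of the Cheeger route by the divergence of the explicit vector field $\nabla r$; this avoids Sard's theorem, level-set regularity, and any discussion of the topology of superlevel sets, at the cost of needing $r$ to be smooth away from the base point (automatic on a Cartan--Hadamard manifold, where there is no cut locus) and the $\varepsilon$-excision near $o$, which you handle correctly since the inner boundary term is $O(\varepsilon^{n-1})$. Both routes yield the same constant. One small caveat applies to both proofs equally: the Hessian comparison you invoke (and likewise Yau's inequality used by the paper) requires $K\le-\alpha^2$ along all radial geodesics meeting $D$, i.e.\ essentially on $M$ rather than only on $D$ as the proposition literally states; you tacitly make this standard strengthening when you declare $M$ to be Cartan--Hadamard, and the paper does the same implicitly.
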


The next result is valid on an arbitrary surface (without additional assumptions, such as being simply connected), but it is only valid for  doubly-connected domains.
\begin{proposition}\label{Prop1}
Let $(S,g)$ be a surface. Let $D\subset S$ be a doubly-connected domain such that $K\leq -1$ where $K$ stands for its Gaussian curvature, then
\begin{equation}
\lambda_{1,p}(D) \geq \frac{1}{p^p}.
\end{equation}
\end{proposition}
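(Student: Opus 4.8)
The plan is to combine the modified Cheeger inequality of Lemma~\ref{CheegerLemma}, applied with connectivity $k=2$, with a linear isoperimetric inequality for subdomains of connectivity at most two. By Lemma~\ref{CheegerLemma} we have $\lambda_{1,p}(D)\geq \left(h_2(D)/p\right)^p$, so it suffices to prove that $h_2(D)\geq 1$; that is, that every relatively compact subdomain $D'\subset D$ with smooth boundary and connectivity at most two satisfies $|\partial D'|\geq |D'|$. Since $D$ is doubly connected, such a $D'$ is planar (genus zero) with at most two boundary components, so its Euler characteristic $\chi(D')$ equals $2$ minus the number of boundary components and hence $\chi(D')\geq 0$: $D'$ is either a topological disk ($\chi=1$) or an annulus ($\chi=0$).

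To establish $|\partial D'|\geq|D'|$ I would sweep $D'$ by its inner parallel sets $D'_t=\{x\in D': d(x,\partial D')\geq t\}$ and set $A(t)=|D'_t|$ and $L(t)=|\partial D'_t|$. The coarea formula applied to the distance function gives $A'(t)=-L(t)$, while the first variation of arclength together with the Gauss--Bonnet theorem gives
\[
L'(t)=-\int_{\partial D'_t}k_g\,ds=-2\pi\chi(D'_t)+\int_{D'_t}K\,dA.
\]
Because erosion of a genus-zero domain can neither create new holes nor add handles, each $D'_t$ again has connectivity at most two, so $\chi(D'_t)\geq 0$; combined with the hypothesis $K\leq -1$ this yields $L'(t)\leq -2\pi\chi(D'_t)-A(t)\leq -A(t)$.

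Now set $\phi(t)=A(t)-L(t)$. The two relations above give $\phi'(t)=-L(t)-L'(t)\geq A(t)-L(t)+2\pi\chi(D'_t)\geq \phi(t)$, so $e^{-t}\phi(t)$ is nondecreasing. At the extinction time $T$ the set $D'_t$ collapses to a point or to a curve, so $A(T)=0$ and $L(T)\geq 0$, whence $\phi(T)=-L(T)\leq 0$. Monotonicity then forces $\phi(0)\leq e^{-T}\phi(T)\leq 0$, i.e. $|D'|=A(0)\leq L(0)=|\partial D'|$, which is exactly the isoperimetric inequality needed to conclude $h_2(D)\geq 1$ and hence \eqref{EqPrincipal}... that is, $\lambda_{1,p}(D)\geq 1/p^p$.

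The step I expect to require the most care is the rigorous justification of the evolution identities for $A(t)$ and $L(t)$: the inner parallel sets are governed by the cut locus of $\partial D'$, their boundaries need not remain smooth, and the topological type (hence $\chi(D'_t)$) can jump at focal and cut times. The saving feature is that every such jump is in the favorable direction, since erosion only simplifies a planar topology and $\chi$ can therefore only increase, so the differential inequality $\phi'\geq\phi$ persists in the absolutely continuous sense across these times. Making this precise, in the spirit of the classical treatment of parallel curves adapted in \cite{O1}, is the technical heart of the argument, while the remainder is the elementary comparison above. It is worth emphasizing that the argument uses $\chi(D'_t)\geq 0$ in an essential way, which is precisely why it is confined to connectivity at most two and does not extend to $k\geq 3$.
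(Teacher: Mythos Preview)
Your proof follows exactly the route the paper takes: apply Lemma~\ref{CheegerLemma} with $k=2$ and then feed in the linear isoperimetric inequality $|\partial D'|\geq |D'|$ for subdomains of connectivity at most two on a surface with $K\leq -1$; the paper simply cites this inequality from \cite[p.~1208, Eq.~(4.31)]{O2}, while you re-derive it via inner parallels, which is precisely the Bol--Fiala argument Osserman uses there. So the approaches coincide, with your write-up unpacking the cited step; the only cosmetic slip is the reference to \eqref{EqPrincipal}, which points to the wrong displayed equation.
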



\subsection{Minimal submanifolds in $\mathbb R^n$}

It is also possible to prove the following using similar arguments:
\begin{proposition}\label{Prop2}
Let $D$ be a domain on a $m$-dimensional minimal submanifold in $\mathbb R^n$. If $D$ lies in a ball of radius $R$, then
\begin{equation}
\lambda_{1,p}(D)\geq \left( \frac{m}{p} \right)^p.
\end{equation}
\end{proposition}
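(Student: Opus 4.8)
The plan is to follow the same two-step scheme used for the other manifold bounds above (McKean's theorem and Proposition~\ref{Prop1}): first invoke the Cheeger-type inequality for the $p$-Laplacian, $\lambda_{1,p}(D)\geq\left(h(D)/p\right)^{p}$ (the generalized version recorded in \cite{KF}; cf.\ Lemma~\ref{CheegerLemma} and \eqref{14}), and then extract a purely geometric lower bound on the Cheeger constant $h(D)=\inf_{D'}|\partial D'|/|D'|$ from the minimality of the submanifold. Concretely, I would show that minimality forces $h(D)\geq m/R$, after which the Cheeger inequality closes the argument.

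The geometric input is the first-variation (divergence) identity for the position vector. Writing $X(x)=x$ for the position vector field on $\mathbb{R}^{n}$ and $X=X^{\top}+X^{\perp}$ for its decomposition into parts tangent and normal to the submanifold $M$, a short computation with an orthonormal tangent frame $\{e_{i}\}_{i=1}^{m}$ gives $\operatorname{div}_{M}X=\sum_{i}\langle D_{e_{i}}X,e_{i}\rangle=m$, while $\operatorname{div}_{M}X^{\perp}=-\langle X^{\perp},\vec{H}\rangle=0$ since $M$ is minimal ($\vec{H}\equiv 0$). Hence $\operatorname{div}_{M}X^{\top}=m$ on $D$. Applying the divergence theorem to a relatively compact subdomain $D'$ with smooth boundary and outward unit conormal $\nu$, and using that $D$ lies in a ball of radius $R$ (so $|X|\leq R$), I obtain
\[
m\,|D'|=\int_{D'}\operatorname{div}_{M}X^{\top}\,dV=\int_{\partial D'}\langle X^{\top},\nu\rangle\,dA\leq\int_{\partial D'}|X|\,dA\leq R\,|\partial D'|.
\]
Taking the infimum over admissible $D'$ yields $h(D)\geq m/R$.

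Combining the two steps gives $\lambda_{1,p}(D)\geq\left(h(D)/p\right)^{p}\geq\left(\tfrac{m}{pR}\right)^{p}$, the desired bound (the radius $R$ entering as the natural length scale). The divergence computation is routine; the \emph{main obstacle} is making sure the Cheeger inequality is available in exactly the form needed here, namely for an $m$-dimensional domain and with the infimum taken over a class of subdomains for which the inequality $m\,|D'|\leq R\,|\partial D'|$ holds. As in the convex-domain proposition, it is enough to verify this isoperimetric-type inequality on subdomains bounded by level sets of the first eigenfunction, so I would either cite the generalized $p$-Cheeger inequality directly or, to be self-contained, run the standard coarea argument: apply the bound $|\partial\Omega_{t}|\geq (m/R)\,|\Omega_{t}|$ to the superlevel sets $\Omega_{t}=\{|u_{1,p}|>t\}$ of the first eigenfunction and conclude via H\"older's inequality. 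A secondary technical point is the regularity of $M$ and of the relevant level sets, which can be dealt with by the coarea formula together with Sard's theorem.
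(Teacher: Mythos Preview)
Your proposal is correct and follows essentially the same route as the paper: combine the $p$-Cheeger inequality with the isoperimetric bound $|\partial D'|\geq(m/R)\,|D'|$ for subdomains of a minimal submanifold contained in a ball of radius $R$ (the paper simply cites this as \cite[Eq.~(14)]{O4}, whereas you derive it directly via the divergence identity for the position vector). Note that your conclusion $\lambda_{1,p}(D)\geq\bigl(m/(pR)\bigr)^{p}$ is the correct one---the missing $R$ in the displayed bound of the proposition is evidently a typo---and your care about the dimension hypothesis in Lemma~\ref{CheegerLemma} is warranted, since the general $m$-dimensional Cheeger inequality from \cite{KF} (or the coarea argument you sketch) is indeed what is needed here.
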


\section{Proofs} \label{S5}

\subsection{Proof of Lemma \ref{CheegerLemma}}

By \eqref{chegconst}, it follows that if one proves \eqref{14} for all domains in a regular exhaustion of $D$, then \eqref{14} will also hold for $D$. Knowing that every finitely-connected domain has a regular exhaustion by domains of the same connectivity, we may assume that $D$ has a smooth boundary. Hence, the $p$-Laplacian Dirichlet eigenvalue problem admits a solution $u_1$ corresponding to $\lambda_{1,p}$ and may be chosen without loss of generality such that $u_1 \geq 0$.

Let $g=u_1^p$. Then, Hölder's inequality implies that
\begin{eqnarray*}
\int_{D} |\nabla g(x)| dx &=& p \int_{D} |u_1(x)|^{p-1}|\nabla u_1(x)| dx \\
& \leq & p ||u_1||^{p-1}_p ||\nabla u_1||_p.
\end{eqnarray*}
Dividing by $||u_1||^p_p$, one gets
\begin{eqnarray}\label{test1}
\frac{1}{p} \frac{\int_{D} |\nabla g(x)| dx }{\int_{D} |g(x)| dx } \leq \frac{||\nabla u_1||_p}{|| u_1||_p} = \lambda_{1,p}(D)^{1/p}.
\end{eqnarray}

For regular values $t$ of the function $g$, we define the set
$$ D_t = \left\{ y\in D: g(y) > t \right\}.$$
We want to show that the connectivity of $D_t$ is at most $k$, since it will imply by \eqref{chegconst} that
\begin{equation} L(t) \geq h_k(D) A(t), \end{equation}
for all regular values of $t$. Here, $L(t)$ is the length of the boundary of $D_t$ and $A(t)$ the area of $D_t$.

Since $t$ is regular, the boundary $\partial D_t$ consists of a finite number, say $m$, of smooth curves $C_1, C_2, ..., C_m$ along which $\nabla g \neq 0$. Let $D_t'$ be any connected component of $D_t$. If the connectivity of $D_t'$ were greater than $k$, then the complement of $D_t$ would contain a component lying completely in $D$ of boundary say, $C_l$. Since $u_1 \in C(D)\cap W^{1,p}(D)$ and
$$ \int_{D} |\nabla u_1|^{p-2} \nabla u_1 \cdot \nabla v \ dx = \lambda_{1,p} \int_D |u_1|^{p-2} u_1  v \ dx \geq 0,$$
for all non negative $v$ in $C_0^{\infty}(D)$, $u_1$ is $p$-superharmonic (see \cite[Theorem 5.2]{Lind3}). By definition of $p$-superharmonicity (again \cite[Definition 5.1]{Lind3}), the comparison principle holds. Therefore, since $u_1 = \sqrt[p]{t}$ on $C_l$, it follows that $u_1^p \geq t$ in the internal region of $C_l$, which contradicts the fact that the internal region lies in the complement of $D_t'$, so that the function $g$ has to be such that $g < t$.

Now, since the set of singular values of $g$ is a closed set of measure zero by Sard's theorem, its complement is a countable union of open intervals $I_n$. Thus, we can define the following
$$E_n = \left\{ p \in D : g(p) \in I_n \right\}.$$
Then, by the coarea formula,
\begin{eqnarray}\label{test2}
\int_D |\nabla g(x)| \ dx & \geq & \sum_{n=1}^\infty \int_{E_n} |\nabla g(x)| \ dx = \sum_{n=1}^\infty \int_{I_n} L(t) \ dt \nonumber \\
& \geq & h_k(D)  \sum_{n=1}^\infty  \int_{I_n}  A(t) \ dt = h_k(D) \int_D g(x) \ dx.
\end{eqnarray}
Combining \eqref{test1} with \eqref{test2} completes the proof.

\subsection{Proof of Theorem \ref{OssCroP1}, of Propositions \ref{OssCroP3}, and \ref{OssCroP4}}\label{P1}

We use the following lemma which was proved in \cite{G}:
\begin{lemma}\label{Grieser}
If $\Omega' \subset \Omega$, then
$$\tilde{\rho}_{\Omega' } \leq \tilde{\rho}_\Omega.$$
\end{lemma}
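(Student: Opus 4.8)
The plan is to regard the reduced inradius as a two-variable function and exploit the monotonicity of its two arguments under inclusion. Writing $\tilde\rho_\Omega = f(\rho_\Omega, |\Omega|)$ with
\[
f(r,A) = \frac{rA}{A + \pi r^2},
\]
which is just a rewriting of the definition, I note two elementary facts for $\Omega'\subset\Omega$: the inradius is monotone, $\rho_{\Omega'}\le \rho_\Omega$ (any disk inscribed in $\Omega'$ is inscribed in $\Omega$), and the area is monotone, $|\Omega'|\le|\Omega|$. The task is therefore to compare $f$ at the smaller pair $(\rho_{\Omega'},|\Omega'|)$ with $f$ at the larger pair $(\rho_\Omega,|\Omega|)$.

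The main subtlety, and the only real obstacle, is that $f$ is \emph{not} jointly monotone in the naive sense. One computes $\partial_A f = \frac{\pi r^3}{(A+\pi r^2)^2} > 0$, so $f$ increases in $A$; but $\partial_r f = \frac{A(A - \pi r^2)}{(A+\pi r^2)^2}$, which is nonnegative only when $A \ge \pi r^2$. In particular, shrinking the area alone pushes $f$ in the wrong direction, so I cannot simply chain componentwise monotonicity. The resolution is the geometric constraint that a planar domain contains its inscribed disk, giving $|\Omega|\ge \pi\rho_\Omega^2$; hence every pair $(\rho,A)$ coming from an actual domain lies in the region $\{A\ge \pi r^2\}$, precisely the region on which $f$ is nondecreasing in $r$.

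With these pieces in hand I would interpolate along a two-step path rather than a straight line, ordering the moves so the $r$-increase happens at the already-enlarged area. First increase the area: since $f$ is increasing in $A$ for every fixed $r>0$, and $|\Omega'|\le|\Omega|$,
\[
f(\rho_{\Omega'},|\Omega'|)\le f(\rho_{\Omega'},|\Omega|).
\]
Then increase the inradius at the fixed area $A=|\Omega|$: for $r\in[\rho_{\Omega'},\rho_\Omega]$ one has $\pi r^2 \le \pi\rho_\Omega^2 \le |\Omega|$, so the constraint $A\ge \pi r^2$ holds throughout this segment and $\partial_r f \ge 0$, yielding
\[
f(\rho_{\Omega'},|\Omega|)\le f(\rho_\Omega,|\Omega|).
\]
Chaining the two inequalities gives $\tilde\rho_{\Omega'}\le\tilde\rho_\Omega$, as desired.

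I expect no analytic difficulty beyond the two quotient-rule derivative computations; the whole argument reduces to the observation that the admissible pairs are confined to $\{A\ge\pi r^2\}$ and to interpolating in the right order, so that the inradius is raised only after the area has been raised to $|\Omega|$, where the sign of $\partial_r f$ is guaranteed.
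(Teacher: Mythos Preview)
Your argument is correct. The paper does not give its own proof of this lemma but simply cites Grieser~\cite{G}; your two-step monotonicity argument --- first raising the area (where $\partial_A f>0$ unconditionally), then raising the inradius at the already-enlarged area $|\Omega|$ (where the geometric constraint $|\Omega|\ge\pi\rho_\Omega^2$ guarantees $\partial_r f\ge 0$ along the whole segment $[\rho_{\Omega'},\rho_\Omega]$) --- is exactly the standard elementary proof, and is self-contained.
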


\begin{proof}[Proof of Theorem \ref{OssCroP1}]

Combining Lemma \ref{CheegerLemma} with Lem\-ma \ref{Grieser} and Bonnesen's inequality (see \cite{Bon}) then yields the desired result.


In order to adapt the argument to planar domains of connectivity $k$, one must use a generalized geometric inequality that can be found in \cite[Theorem 1]{Cr}, stating that
$$\frac{|\partial \Omega|}{|\Omega|} \geq \frac{\sqrt{2}}{\sqrt{k}\rho}.$$
Combining Lemma \ref{CheegerLemma}, \cite[Theorem 1]{Cr}, and the fact that  $\rho_{\Omega'} \leq \rho_\Omega$ provided that $\Omega' \subset \Omega$ yields the desired result.
\end{proof}

\begin{proof}[Proof of Proposition \ref{OssCroP3}]

We start by using Burago and Zalgaller's inequality that can be found in \cite{BZ}, and that states that
\begin{equation}\label{BZ} \rho_D |\partial D| \geq |D| + \left( \pi - \frac{1}{2} \beta  \right) \rho_D^2 \quad \iff \quad \frac{|\partial D|}{|D|} \geq \frac{1 + \frac{(\pi - \frac{1}{2}\beta) \rho_D^2}{|D|}}{\rho_D} \geq \frac{1}{\rho_{D}}.\end{equation}
By definition of $h_1(D)$, together with \eqref{BZ}, one gets the desired result.

\end{proof}

\begin{proof}[Proof of Proposition \ref{OssCroP4}]
Since $S$ is simply connected, one can use \cite[Theorem 1]{I} or  \cite[Theorem 8(c)]{O3} to get that
$$|\partial D| \geq \frac{\alpha |D|}{\tanh \alpha\rho_D}  + \frac{2\pi}{\alpha} \tanh \frac{\alpha \rho_D}{2} \geq \alpha |D| \coth( \alpha\rho_D).$$
Combining the previous inequality with Lemma \ref{CheegerLemma} and the fact that  $\rho_{\Omega'} \leq \rho_\Omega$ provided that $\Omega' \subset \Omega$ proves the first part of Theorem \ref{OssCroP4}. To conclude the proof, one must use \cite[Theorem 8]{O3}, which states that
$$|\partial D| \geq \alpha |D| \coth( \alpha R_D).$$
Combining the latter inequality with Lemma \ref{CheegerLemma} and the fact that for any subdomain $D'$, its circumradius satisfies $R' \leq R, \coth(\alpha R') \geq \coth(\alpha R)$ yields the desired result.
\end{proof}

\subsection{Proof of Proposition \ref{zeros}}

Consider a ball $B_R$ and suppose that $u_\lambda\neq 0$ inside $B_R$. Since $u_\lambda \neq 0$ in $B_R$, there exists a nodal domain $A$ of $u_\lambda$ such that
$B_R \subset A.$
By  \eqref{monotonicite}, we have that
$$R\leq\rho_A\leq \left(\frac{\lambda_{1,p}(B_1)}{\lambda_{1,p}(A)}\right)^{\frac{1}{p}}.$$
Since the restriction of $u_\lambda$ corresponds to the first eigenfunction on $A, \lambda_{1,p}(A) = \lambda $. Thus, we get that
$$R\leq \left(\frac{\lambda_{1,p}(B_1)}{\lambda}\right)^{\frac{1}{p}},$$
leading to a contradiction.

\subsection{Proof of Theorem \ref{nodalsets}}

Notice that for the $p$-Laplacian, it is still unclear whether $\operatorname{int}(\mathcal{Z}_\lambda)$ is empty or not. Nevertheless, if $\operatorname{int}(\mathcal{Z}_\lambda)\neq \emptyset$, then $\mathcal{H}^1(\mathcal{Z}_\lambda) = + \infty$.

Suppose that $\operatorname{int}(\mathcal{Z}_\lambda)= \emptyset$. We need to show that $\exists C>0$ such that $\mathcal{H}^1(\mathcal{Z}_\lambda) \leq C \lambda^{\frac{1}{p}}$ for $\lambda$ large enough.

\begin{figure}[htb]
\begin{center}
\subfigure[Case 3]{
\includegraphics[height=1.5in,width=1.5in,angle=-90]{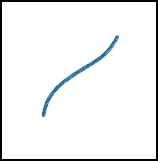}
\label{fig:cas3}
}
\subfigure[Case 4]{
\includegraphics[height=1.5in,width=1.5in,angle=-90]{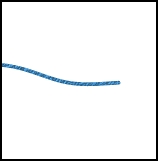}
\label{fig:cas4}
}
\caption{Nodal lines in a square.}
\end{center}
\end{figure}


%
%

By Proposition \ref{zeros}, $\Omega$ can be split into squares $S_c$ of side $c=\operatorname{Area}(\Omega)\lambda^{-1/p}$ such that each square contains a zero of $u_\lambda$. Take $\lambda$ large enough to allow that the center of each square corresponds to a zero of $u_\lambda$. We represent the various cases of nodal lines in a square in Figure 1 and Figure 2.  Recall the following Harnack inequality:

\begin{theorem}[Theorem 1.1 of \cite{Tr}]\label{Harnack} Let $K = K(3\rho) \subset \Omega$ be a cube of length $3\rho$. Let $u_\lambda$ be a solution of \eqref{pLaplacian} associated to the eigenvalue $\lambda$ such that $u_\lambda(x) \geq 0$ for all $x \in K$, then
$$ \max_{K(\rho)} u_\lambda(x) \leq C \min_{K(\rho)} u_\lambda(x),  $$
where $C$ is a positive constant that depends on $n,p$ and on $\lambda$.
\end{theorem}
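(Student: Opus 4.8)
The plan is to prove the Harnack inequality by the De Giorgi--Nash--Moser iteration technique, following the method of \cite{Tr} for quasilinear equations of divergence form $\operatorname{div} A(x,u,\nabla u) = B(x,u,\nabla u)$. For a nonnegative solution of \eqref{pLaplacian}, written as $\operatorname{div}(|\nabla u|^{p-2}\nabla u) = -\lambda u^{p-1}$, I would take $A(x,u,\xi) = |\xi|^{p-2}\xi$ and $B(x,u,\xi) = -\lambda u^{p-1}$. The first step is to verify the structure conditions: $A(x,u,\xi)\cdot\xi = |\xi|^{p}$, $|A(x,u,\xi)| = |\xi|^{p-1}$, and $|B(x,u,\xi)| = \lambda u^{p-1}$, so that the only nonzero lower-order coefficient is the constant $\lambda$, whose integrability is automatic. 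Since the equation is invariant under $u \mapsto cu$ and the dilation $x \mapsto \rho x$ sends the parameter $\lambda$ to $\lambda\rho^{p}$, the scale-invariant quantity governing the estimates is $\lambda\rho^{p}$; rescaling $K(3\rho)$ to a unit cube absorbs the $\rho$-dependence into the stated dependence of $C$ on $n,p,\lambda$.

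Next I would derive the two one-sided estimates that the iteration produces. For the local maximum principle, test the weak formulation with $v = \eta^{p}u^{\beta}$ for a cutoff $\eta$ and exponents $\beta \geq 1$; the principal term yields a Caccioppoli inequality for $\int \eta^{p}u^{\beta-1}|\nabla u|^{p}$, while the eigenvalue term enters with a factor proportional to $\lambda$ (equivalently $\lambda\rho^{p}$ after rescaling), which is precisely the source of the $\lambda$-dependence of $C$. Feeding the resulting inequality into the Sobolev inequality and iterating over cubes shrinking from $K(2\rho)$ to $K(\rho)$ gives $\sup_{K(\rho)} u \leq C\left(\frac{1}{|K(2\rho)|}\int_{K(2\rho)} u^{s}\right)^{1/s}$ for any fixed $s>0$. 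Running the analogous scheme with negative exponents $\beta<0$ produces the complementary lower bound $\inf_{K(\rho)} u \geq C\left(\frac{1}{|K(2\rho)|}\int_{K(2\rho)} u^{-s}\right)^{-1/s}$ for small $s>0$.

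The heart of the argument, and the step I expect to be the main obstacle, is bridging the positive and negative exponent regimes, since the naive iteration degenerates exactly at the singular exponent $p-1$. Here I would test with $v = \eta^{p}u^{1-p}$; using the identity $u^{1-p}|\nabla u|^{p-2}\nabla u = |\nabla\log u|^{p-2}\nabla\log u$ and Young's inequality, and noting that the eigenvalue contribution has a favorable sign in this test, one obtains the logarithmic energy estimate $\int_{K}\eta^{p}|\nabla\log u|^{p} \leq C\int|\nabla\eta|^{p}$. A Poincaré inequality then shows that $w = \log u$ has bounded mean oscillation on $K(2\rho)$ with norm controlled by $n,p,\lambda\rho^{p}$, and the John--Nirenberg lemma furnishes an exponent $p_{0}>0$ and a constant $C$ with $\left(\frac{1}{|K(2\rho)|}\int_{K(2\rho)} u^{p_{0}}\right)^{1/p_{0}} \leq C\left(\frac{1}{|K(2\rho)|}\int_{K(2\rho)} u^{-p_{0}}\right)^{-1/p_{0}}$.

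Finally, I would chain the three comparisons: the supremum estimate at exponent $p_{0}$ gives $\sup_{K(\rho)} u \leq C\left(\frac{1}{|K(2\rho)|}\int_{K(2\rho)} u^{p_{0}}\right)^{1/p_{0}}$, the John--Nirenberg step passes to the exponent $-p_{0}$, and the infimum estimate gives $\left(\frac{1}{|K(2\rho)|}\int_{K(2\rho)} u^{-p_{0}}\right)^{-1/p_{0}} \leq C\inf_{K(\rho)} u$, whence $\sup_{K(\rho)} u \leq C\inf_{K(\rho)} u$ with $C = C(n,p,\lambda)$. The degenerate or singular character of the $p$-Laplacian for $p\neq 2$ requires care in justifying the admissibility of the fractional and negative test functions $u^{\beta}$; I would work with the truncations $u_{\varepsilon}=u+\varepsilon$, establish every estimate uniformly in $\varepsilon$, and pass to the limit $\varepsilon\to 0$. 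Because the principal part is exactly $(p-1)$-homogeneous, each energy estimate scales correctly and no difficulty beyond the classical second-order case arises once the structure conditions are in place.
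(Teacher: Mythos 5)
The paper does not prove this statement at all: it is quoted verbatim as Theorem 1.1 of the cited reference \cite{Tr} (Trudinger's Harnack inequality for quasilinear equations), and your proposal is a correct reconstruction of exactly that reference's argument — verification of the structure conditions for $A(x,u,\xi)=|\xi|^{p-2}\xi$, $B=-\lambda u^{p-1}$, Moser iteration with positive and negative power test functions, and the $\log u$ energy estimate plus John--Nirenberg to cross the degenerate exponent. Since your route coincides with the proof behind the paper's citation (including the correct identification of $\lambda\rho^{p}$ as the scale-invariant quantity producing the $\lambda$-dependence of $C$), there is nothing to add.
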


Notice that in order for Theorem \ref{Harnack} to fail, we know that every neighborhood of the boundary of the nodal domain must contain points such that $u_\lambda$ changes sign. To do so, the nodal line must be closed. Therefore, nodal lines depicted in Figure 1 can not occur.

\begin{figure}[htb]
\begin{center}
\subfigure[Case 1]{
\includegraphics[height=1.5in,width=1.5in,angle=-90]{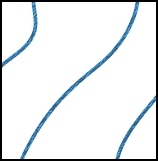}
\label{fig:cas1}
}
\subfigure[Case 2]{
\includegraphics[height=1.5in,width=1.5in,angle=-90]{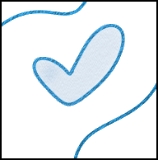}
\label{fig:cas2}
}
\caption{Closed nodal lines in a square.}
\end{center}
\end{figure}

Suppose that there is a closed nodal line inside a square. If it were the case, it would mean that there exists a nodal domain $A$ included the square. Since the eigenfunction restricted to $A$ would then correspond to the first one, domain monotonicity would yield a contradiction. Indeed, we would have the following
$$ \lambda_{1,p}(S_1) \lambda \operatorname{Area}(\Omega)^{-p} = \lambda_{1,p} (S_c) \leq \lambda_{1,p}(A) = \lambda,$$
yielding that
$$ \lambda_{1,p}(S_1) \leq \operatorname{Area}(\Omega)^{p},$$
a contradiction (simply rescale $\Omega$ if necessary).

Therefore, any nodal line inside any such square must be at least of length greater than or equal to $C_1 \lambda^{-1/p}$ . Since $\operatorname{Area}(\Omega)= \operatorname{Area}(S_c) \lambda^{2/p}$, there are roughly $ \lambda^{2/p}$ squares covering $\Omega$, implying that

$$\mathcal{H}^1(\mathcal{Z}_\lambda)\geq C_2 \lambda^{2/p} \lambda^{-1/p} = C_2 \lambda^{1/p}.$$


\subsection{Proof of Theorem \ref{thm1}}

Using \cite[Theorem 14.1.2]{Maz},
we get the following lemma:
\begin{lemma}\label{L1}
 Let $F$ be a compact subset of $\closure{B}_r$.

\noindent 1. If $1 < p \leq n$, for all $u \in C^{\infty}(\closure{B}_r)$ such that $u \equiv 0$ on $F$,  there exists a positive constant $C_1(n,p)$ depending only on $n$ and $p$ such that
\begin{equation}
\operatorname{cap}_p(F) \leq \frac{C_1(n,p)}{r^{-n}} \frac{\int_{\closure{B}_r} |\nabla u|^p}{\int_{\closure{B}_r} |u|^p}.
\end{equation}

\noindent 2. If $1<p \leq n$, for all $u \in C^{\infty}(\closure{B}_r)$ such that $u \equiv 0$ on $F$, where $F$ is a negligible subset of $\closure{B}_r$, and
\begin{equation}
||u||_{L^p(\closure{B}_{r/2})} \leq C || \nabla u ||_{L^p(\closure{B}_{r})},
\end{equation}
then there exists a positive constant $C_2(n,p)$ depending only on $n$ and $p$ such that
\begin{equation}
\operatorname{cap}_p(F) \geq \frac{C_2(n,p)}{r^{-n}} \frac{\int_{\closure{B}_r} |\nabla u|^p}{\int_{\closure{B}_r} |u|^p}.
\end{equation}
\end{lemma}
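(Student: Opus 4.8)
The plan is to read the two inequalities as the two halves of a single two-sided equivalence between the $p$-capacity of $F$ and the bottom of the Dirichlet-type Rayleigh quotient on the ball taken over functions vanishing on $F$, and to extract each half from Maz'ya's Theorem 14.1.2. Before invoking that result I would first reduce everything to the unit ball by scaling, since this is exactly what absorbs the awkward factor $C_i(n,p)/r^{-n}=C_i(n,p)\,r^{n}$ and turns both claims into scale-free statements.

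Concretely, set $x=ry$, $\tilde u(y)=u(ry)$ and $\tilde F=r^{-1}F\subset \closure{B}_1$. Under this change of variables one has $\operatorname{cap}_p(F)=r^{n-p}\operatorname{cap}_p(\tilde F)$, $\int_{\closure{B}_r}|\nabla u|^p\,dx=r^{n-p}\int_{\closure{B}_1}|\nabla\tilde u|^p\,dy$ and $\int_{\closure{B}_r}|u|^p\,dx=r^{n}\int_{\closure{B}_1}|\tilde u|^p\,dy$, so that the Rayleigh quotient carries a single factor $r^{-p}$. Substituting these three identities, the factor $r^{n}\cdot r^{-p}=r^{n-p}$ on the right cancels against the capacity scaling on the left, and both assertions collapse to the $r=1$ statements $\operatorname{cap}_p(\tilde F)\le C_1\,\|\nabla\tilde u\|_p^p/\|\tilde u\|_p^p$ and $\operatorname{cap}_p(\tilde F)\ge C_2\,\|\nabla\tilde u\|_p^p/\|\tilde u\|_p^p$, with the normalization hypothesis of part 2 rescaling consistently. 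Hence it suffices to treat a fixed ball.

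For part 1, note that after taking the infimum over all admissible $u$ the claim is exactly the lower bound $\operatorname{cap}_p(\tilde F)\le C_1\,\mu(\tilde F)$, where $\mu(\tilde F)$ is the least Rayleigh quotient among functions vanishing on $\tilde F$. This is the ``easy'' half: from such a $u$ one manufactures an admissible competitor for $\operatorname{cap}_p(\tilde F)$ — a function equal to $1$ on $\tilde F$ — by truncating $1-u/\bar u$ at a suitable value $\bar u$ of $u$, or more robustly by summing the capacities of the dyadic level sets $\{|u|>2^{k}\}$ via Maz'ya's capacitary strong-type inequality, and then estimating the resulting Dirichlet energy by $\int_{\closure{B}_1}|\nabla u|^p$. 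This is precisely the content of the lower capacitary estimate of Theorem 14.1.2, and it is why part 1 requires no negligibility assumption on $F$.

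Part 2 is the reverse inequality and is where the real work and the two extra hypotheses live. Here one must bound the Rayleigh quotient of a given $u$ from above by $\operatorname{cap}_p(\tilde F)$, i.e. show that $\tilde F$ being capacitively large forces the function $u$ that vanishes on it to have comparatively large gradient. The negligibility of $F$ guarantees that $\tilde F$ is small enough that the capacitary level sets of $u$ do not fill the ball, while the auxiliary Poincaré inequality $\|u\|_{L^p(\closure{B}_{1/2})}\le C\|\nabla u\|_{L^p(\closure{B}_1)}$ supplies the control of $\|u\|_{L^p}$ by $\|\nabla u\|_{L^p}$ on the bulk that lets the estimate be anchored on $\closure{B}_{1/2}$ and then propagated to $\closure{B}_1$. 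Feeding these two facts into the upper capacitary estimate of Theorem 14.1.2 produces $C_2(n,p)$. I expect the main obstacle to be not any single computation but the bookkeeping of matching our hypotheses to the exact form in which Maz'ya states his theorem — in particular verifying that ``$(p,\gamma)$-negligible'' is precisely the smallness condition his upper bound demands and that the constants can be taken to depend only on $n$ and $p$.
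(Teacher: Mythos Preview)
Your proposal is correct and follows essentially the same route as the paper: the paper does not prove the lemma at all but simply records it as a direct consequence of \cite[Theorem~14.1.2]{Maz}, and your plan is precisely to extract both halves from that theorem after a scaling reduction to the unit ball. Your additional scaling computation and sketch of the capacitary mechanism are accurate elaborations of what the paper leaves implicit.
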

 Let us also recall the following two properties of the $p$-capacity (which are proved in \cite{Maz}):
\begin{itemize}
\item The $p$-capacity is monotone: $F_1 \subset F_2 \implies \operatorname{cap}_p (F_1) \leq \operatorname{cap}_p(F_2)$.

\item The $p$-capacity of a closed ball of radius $r$ can be computed explicitly:
\begin{equation}
\operatorname{cap}_p(\closure{B}_r) = r^{n-p}\operatorname{cap}_p(\closure{B}_1) = r^{n-p} \omega_n \left ( \frac{|n-p|}{p-1}\right)^{p-1},
\end{equation}
where $\omega_n$ is the area of the unit sphere $S^{n-1} \subset \mathbb{R}^n$ and $p \neq n$.
\end{itemize}
We are ready to begin the proof of the lower bound. The ideas used in the following proof come from \cite{MS}.
\begin{proof}[Lower bound of Theorem \ref{thm1}]

Fix $\gamma \in (0,1)$ and choose any $r > r_{\Omega,\gamma}$. Then, any ball $\closure{B}_r$ is of non-negligible intersection, i.e.
\begin{equation} \label{nonnegligible}
\operatorname{cap}_p(\closure{B}_r \setminus \Omega) \geq \gamma \operatorname{cap}_p(\closure{B}_r). \nonumber
\end{equation}
Since any $u \in C_0^\infty(\Omega)$ vanishes on $\closure{B}_r \setminus \Omega$, one can use Lemma \ref{L1}, part 1. Using \eqref{nonnegligible} and the explicit value of the $p$-capacity of a closed ball, one gets the following:
\begin{eqnarray}\label{step1}
\int_{\closure{B}_r} |u|^p dx & \leq & \frac{C_1(n,p)}{r^{-n}  \operatorname{cap}_p(\closure{B}_r \setminus \Omega)} \int_{\closure{B}_r} |\nabla u|^p dx \\
& \leq & \frac{C_1(n,p)}{r^{-n} \gamma  \operatorname{cap}_p(\closure{B}_r)} \int_{\closure{B}_r} |\nabla u|^p dx \nonumber \\
& \leq & \frac{C_1(n,p)}{r^{-p} \gamma  \operatorname{cap}_p(\closure{B}_1)} \int_{\closure{B}_r} |\nabla u|^p dx. \nonumber
\end{eqnarray}

Choose a covering of $\mathbb{R}^n$ by balls $\closure{B}_r = \closure{B}_r^{(k)}$, $k=1,2, ...$, so that the multiplicity of this covering is at most $N = N(n)$, which is bounded since
for example, for $n\geq 2$, the following estimate is valid (see \cite[Theorem 3.2]{R}):
\begin{equation}
N(n) \leq n \log(n) + n \log(\log(n)) + 5n. \nonumber
\end{equation}
Sum up \eqref{step1} to get the following:
\begin{eqnarray*}
\int_{\mathbb{R}^n} |u|^p dx & \leq & \sum_k \int_{\closure{B}^{(k)}_r}  |u|^p dx \\
& \leq  & \frac{C_1(n,p)}{r^{-p}\gamma  \operatorname{cap}_p(\closure{B}_1)}  \sum_k \int_{\closure{B}^{(k)}_r} |\nabla u|^p dx \\
& \leq  & \frac{C_1(n,p) N(n)}{r^{-p}\gamma  \operatorname{cap}_p(\closure{B}_1)} \int_{\mathbb{R}^n} |\nabla u|^p dx .
\end{eqnarray*}
Since for all $u \in C_0^\infty(\Omega)$, we have that
\begin{equation}
\frac{\gamma \operatorname{cap}_p(\closure{B}_1)r^{-p}}{C_n  N(n)} \leq \frac{ \int_{\Omega} |\nabla u|^p dx}{ \int_{\Omega} |u|^p dx }, \nonumber
\end{equation}
we get that
\begin{equation}\label{eqfin}
\lambda_{1,p}(\Omega) \geq K_1(\gamma, n ,p) r^{-p} =  \frac{\gamma  \operatorname{cap}_p (\closure{B_1})}{C_1(n,p)  N(n)} r^{-p}.
\end{equation}
Taking the limit of \eqref{eqfin} as $r \searrow r_{\Omega, \gamma}$ yields the desired result.
\end{proof}

The proof of the upper bound is very similar to the last one, but uses the second part of Lemma \ref{L1}. However, this proof is different from the one given in \cite{MS}, but has the disadvantage of not yielding an explicit constant. Nevertheless, no such constant are known in the case of the $p$-Laplacian (recall that Lieb's constant for the lower bound and that the upper bound given in \eqref{monotonicite} are not totally explicit since they both depend on $\lambda_{1,p}(B)$).

\begin{proof}[Upper bound of Theorem \ref{thm1}]

Fix $\gamma \in (0,1)$ . Consider $r_{\Omega,\gamma}$. By definition, we know that
\begin{equation}
\operatorname{cap}_p(\closure{B}_{r_{\Omega,\gamma}} \setminus \Omega) \leq \gamma \operatorname{cap}_p(\closure{B}_{r_{\Omega,\gamma}}). \nonumber
\end{equation}
We know want to use Lemma \ref{L1}, part $2$. Let $F=\closure{B}_{r_{\Omega,\gamma}} \setminus \Omega$. Clearly, $F$ is a negligible subset of $\closure{B}_{r_{\Omega,\gamma}}$.
It is also clear that any test function $u \in C_0^\infty(\Omega)$ will vanish identically on $F$. Therefore, for any such function, using Poincaré inequality, we get
\begin{eqnarray*}
||u||_{L^p(\closure{B}_{r_{\Omega,\gamma}/2})} \leq ||u||_{L^p(\closure{B}_{r_{\Omega,\gamma}})} \leq C ||\nabla u||_{L^p(\closure{B}_{r_{\Omega,\gamma}})}.
\end{eqnarray*}
Therefore, one can use Lemma \ref{L1} part $2$ and get :
\begin{eqnarray*}\
\int_{\closure{B}_{r_{\Omega,\gamma}}} |u|^p dx & \geq & \frac{C_2(n,p)}{r_{\Omega,\gamma}^{-n}  \operatorname{cap}_p(\closure{B}_{r_{\Omega,\gamma}} \setminus \Omega)} \int_{\closure{B}_{r_{\Omega,\gamma}}} |\nabla u|^p dx \\
& \geq & \frac{C_2(n,p)}{r_{\Omega,\gamma}^{-n} \gamma  \operatorname{cap}_p(\closure{B}_{r_{\Omega,\gamma}})} \int_{\closure{B}_{r_{\Omega,\gamma}}} |\nabla u|^p dx \nonumber \\
& \geq & \frac{C_2(n,p)}{r_{\Omega,\gamma}^{-p} \gamma  \operatorname{cap}_p(\closure{B}_1)} \int_{\closure{B}_{r_{\Omega,\gamma}}} |\nabla u|^p dx. \nonumber
\end{eqnarray*}

Choose a covering of $\mathbb{R}^n$ by balls $\closure{B}_{r_{\Omega,\gamma}} = \closure{B}_{r_{\Omega,\gamma}}^{(k)}$, $k=1,2, ...$, so that the multiplicity of this covering is at most $N = N(n)$, and get
\begin{eqnarray*}
\int_{\mathbb{R}^n} |\nabla u|^p dx & \leq & \sum_k \int_{\closure{B}^{(k)}_{r_{\Omega,\gamma}}}  |\nabla u|^p dx \\
& \leq  & \frac{r_{\Omega,\gamma}^{-p}\gamma  \operatorname{cap}_p(\closure{B}_1)}{C_2(n,p)}  \sum_k \int_{\closure{B}^{(k)}_{r_{\Omega,\gamma}}} | u|^p dx \\
& \leq  & \frac{r_{\Omega,\gamma}^{-p}\gamma  \operatorname{cap}_p(\closure{B}_1) N(n)}{C_2(n,p)} \int_{\mathbb{R}^n} | u|^p dx .
\end{eqnarray*}
For such $u \in C_0^\infty(\Omega)$, we have that
\begin{equation}
\lambda_{1,p}(\Omega) \leq \frac{\int_{\mathbb{R}^n} |\nabla u|^p dx }{\int_{\mathbb{R}^n} | u|^p dx} \leq \frac{r_{\Omega,\gamma}^{-p} \gamma  \operatorname{cap}_p(\closure{B}_1) N(n)}{C_2(n,p)}; \nonumber
\end{equation}
thus, yielding the desired result with $K_2(\gamma, n,p)= \frac{\gamma  \operatorname{cap}_p(\closure{B}_1) N(n)}{C_2(n,p)}$.
\end{proof}

\subsection{Proofs of Propositions \ref{McKean}, \ref{Prop1}, and \ref{Prop2}}

The proofs are very straightforward and consist of combining Lemma \ref{CheegerLemma} with use \cite[p.26, eq. (121)-(122)]{O3}, \cite[p. 1208, Eq. (4.31)]{O2}, or \cite[Eq. (14)]{O4} respectively.

\proof[Acknowledgements]
This paper is part of the author's Ph.D. thesis under the supervision of Iosif Polterovich. The author is thankful to Iosif Polterovich and to Dima Jakobson for suggesting the problem. Moreover, he wishes to thank Dorin Bucur, Yaiza Canzani, Daniel Grieser, Dan Mangoubi, Pavel Drabek, and Robert G. Owens for useful discussions and references. Finally, the author thanks Gabrielle Poliquin for her help with the figures.

\bibliographystyle{amsplain}

\end{document}